\pgfplotsset{compat = newest}
\newcommand{\ecal}{\mathcal{E}}
\newcommand{\mcal}{\mathcal{M}}
\newcommand{\rcal}{\mathcal{R}}
\newcommand{\D}{\ensuremath{\mathbb{D}}}
\newcommand{\I}{\ensuremath{\mathbb{I}}}
\newcommand{\R}{\ensuremath{\mathbb{R}}}
\renewcommand{\S}{\ensuremath{\mathbb{S}}}
\newcommand{\T}{\ensuremath{\mathbb{T}}}
\newcommand{\nbf}{\ensuremath{\mathbf{n}}}
\newcommand{\tbf}{\ensuremath{\mathbf{t}}}
\newcommand{\ubf}{\ensuremath{\mathbf{u}}}
\newcommand{\vbf}{\ensuremath{\mathbf{v}}}
\newcommand{\xbf}{\ensuremath{\mathbf{x}}}
\renewcommand{\d}{\mathrm{d}}
\newcommand{\trans}[1]{{(#1)}^{\mathrm{T}}}
\newcommand{\favre}[1]{\langle{#1}\rangle}
\renewcommand{\bar}[1]{\overline{#1}}
\newcommand{\ubrace}[2]{\underset{#1}{\underbrace{#2}}}
\newcommand{\norm}[1]{\left| #1 \right|}
\newcommand{\Norm}[1]{\left\| #1 \right\|}
\newcommand{\limit}[2]{\xrightarrow[#1\rightarrow #2]{}}
\crefname{hypothesis}{Hypothesis}{Hypotheses}
\crefname{fact}{Fact}{Facts}
\title{Averaged models for compressible two-phase stratified flows on thin domains\thanks{Submitted to the editors on \today.
\funding{N. Seguin’s work was co-funded by
the Agence Nationale de la Recherche (ANR) under the ANR MSMPhi project (ANR-23-CE40-0013) and the ANR HEAD project (ANR-24-CE40-3260).}}}
\author{Pierrick Le Vourc'h\thanks{IMAG, ANGUS, Univ Montpellier, Inria, CNRS, Montpellier, France
  (\email{pierrick.le-vourc-h@umontpellier.fr}, \email{nicolas.seguin@inria.fr}).}
\and Khaled Saleh\thanks{Institut de Mathématiques de Marseille, Université d'Aix-Marseille, CNRS, Marseille, France
  (\email{khaled.saleh@univ-amu.fr}).}
\and Nicolas Seguin\footnotemark[2]}
\begin{document}

\maketitle
% REQUIRED
\begin{abstract}
This paper deals with the derivation of compressible two-phase flow models. We use a thin domain approximation of a two-layer configuration governed by the Navier-Stokes equations, following the works [H. B. Stewart and B. Wendroff, J. Comp. Phys., 56 (1984)] and [V. H. Ransom and D. L. Hicks, {\em J. Comput. Phys.}, 75 (1988)]. In order to recover source terms and two-velocity models directly from this asymptotic analysis, we remove the continuity of the tangential component of the velocity at the interface, and properly scale friction and viscosity coefficients. We are then able to derive two-velocity one-pressure models, first in the barotropic case and then in the full Navier-Stokes-Fourier case.
\end{abstract}

% REQUIRED
\begin{keywords}
Compressible fluids, two-phase flows, asymptotic analysis, thin-film approximation, stratified flows
\end{keywords}

% REQUIRED
\begin{MSCcodes}
76T10, 35Q30, 35C20, 76A20
\end{MSCcodes}

\section{Introduction}
In this paper, we present a derivation of averaged models for compressible two-phase flows. The usual process consists in applying averaging operators on a complete local description of the flow, see for instance the classical textbooks~\cite{drew_theory_1999} and~\cite{ishii_thermo-fluid_2011}. The application of this process to bifluid formulations of stratified compressible two-phase flows led to a variety of different averaged models in the 1980s, see for instance~\cite{stewart_two-phase_1984,ransom_hyperbolic_1984,ransom_hyperbolic_1988}. The averaging process being incomplete, the previous papers provide additional heuristic arguments to close the models. They also discuss the influence of the closure arguments on the resulting models, in particular when focusing on interaction terms between phases. The \textit{ad hoc} closure conditions introduced in these papers --- especially at the interface --- make the derivation of two-velocity models possible. Such approachs have been followed in~\cite{demay_compressible_2017} to include gravity effects as well as mass and momentum transfers.

Recent applications of mathematical homogenization to multiphase compressible Navier-Stokes equations provided the first complete, and rigorous, derivation of one-dimensional averaged models including the mechanical interaction source terms~\cite{bresch_multi-fluid_2011,bresch_note_2015,bresch_compressible_2019,hillairet_baer-nunziato_2019,bresch_mathematical_2022,hillairet_analysis_2023}. Indeed, provided that the velocity field is smooth enough --- especially through the interfaces between fluids ---, this method enables the extraction of averaged variables and equations by letting the frequency of density oscillations tend to infinity. Since this procedure has yet only been carried in one space dimension, the inherent smoothness of the velocity field causes the averaged model to be described by a unique velocity field, making it difficult to obtain two-velocity models.

In this paper, we propose an asymptotic derivation of two-velocity, one-pressure averaged models. They are composed of two equations for phasic mass conservation, two nonconservative equations for phasic momentum, and two nonconservative equations for phasic energies:
\begin{equation}
    \begin{aligned}
        \partial_t (\alpha_k \rho_k)+\nabla \cdot (\alpha_k \rho_k \ubf_k) &= 0,\\
        \partial_t (\alpha_k \rho_k \ubf_k)+\nabla \cdot (\alpha_k \rho_k \ubf_k\otimes \ubf_k) +\nabla p_k &=S_k^m,\\
        \partial_t (\alpha_k \rho_k E_k)+\nabla \cdot (\alpha_k (\rho_k E_k+p_k) \ubf_k) &=S_k^e,  
    \end{aligned}
    \label{eq:6eq}
\end{equation}
where $\alpha_k$ is the volume fraction of phase $k \in \{1,2\}$, $\rho_k$ is its density, $\ubf_k$ its velocity, $p_k$ its pressure and $E_k$ its energy. The source terms in the momentum and energy equations include the contribution of external forces, drag forces and heat exchanges. These models are closed by the definitions of total energies,
\begin{equation}
    E_k=\frac{1}{2} \norm{\ubf_k}^2 + e_k,
	\label{eq:total_energy}
\end{equation}
where $e_k$ is the specific internal energy of phase $k$, the equations of state
\begin{equation}
    \begin{aligned}
        p_k&=p_k(\rho_k,\rho_k e_k),\\
        \theta_k&=\theta_k(\rho_k,\rho_k e_k)
    \end{aligned}
    \label{eq:eos}
\end{equation} where $\theta_k$ is the temperature of phase $k$ (which may appear in $S_k^e$), and the equality of pressures 
\begin{equation}
    p_1(\rho_1,\rho_1e_1)=p_2(\rho_2,\rho_2e_2)
	\label{eq:eq_p}
\end{equation}
which permits to deduce the void fractions since $\alpha_1 + \alpha_2 = 1$.	

It is well-known that this model is not well-posed in the sense of Hadamard, since the convective part entails complex eigenvalues~\cite{ramshaw_characteristics_1978, stewart_stability_1979}. Several directions have been explored to stabilize this model, like the addition of viscous phenomena, surface tension~\cite{ramshaw_characteristics_1978}, virtual mass~\cite{bestion_physical_1990,ndjinga_influence_2007} or internal capillary forces~\cite{bresch_global_2010}. It is also possible to consider two-velocity and two-pressure models~\cite{baer_two-phase_1986,ransom_hyperbolic_1984}, which are well-posed for smooth solutions~\cite{coquel_two-properties_2014}.

\begin{figure}[!ht]
    \centering
    \begin{tikzpicture}[scale=0.47]
        % Tuyau
        \begin{scope}[canvas is zx plane at y=3]
            \draw (-2,-8) rectangle ++(4,16);
        \end{scope}
        \begin{scope}[canvas is zx plane at y=-3]
            \draw (-2,-8) rectangle ++(4,16);
            %\draw (0,7) node {$\T_L^2$};
        \end{scope}
        %\draw (8.5,-3,2) node[right]{\large $\Omega_D$};

        % Interface

        \path[fill=red!30!white](-8,0,-2) .. controls +(2,1,0) and +(-2,-.8,0).. (0,0,-2)
         .. controls +(2,1,0) and +(-2,-.6,0) .. (8,0,-2)
         .. controls +(0,-.8,2) and +(0,1.5,0.01) ..  (8,0,2)
         ..controls +(-2,-.6,0) and +(2,1,0) ..(0,0,2)
         .. controls +(-2,-.8,0) and +(2,1,0) ..(-8,0,2)
         ..controls +(0,1.5,0.01) and +(0,-.8,2) ..(-8,0,-2);
 
         \draw[thick, color=red](-8,0,-2) .. controls +(2,1,0) and +(-2,-.8,0).. (0,0,-2)
         .. controls +(2,1,0) and +(-2,-.6,0) .. (8,0,-2)
         .. controls +(0,-.8,2) and +(0,1.5,0.01) ..  (8,0,2)
         ..controls +(-2,-.6,0) and +(2,1,0) ..(0,0,2)
         .. controls +(-2,-.8,0) and +(2,1,0) ..(-8,0,2)
         ..controls +(0,1.5,0.01) and +(0,-.8,2) ..(-8,0,-2);
 
        %\draw [color=red!90!black] (8.4,0,0) node[right] {$\Gamma_D$};

        % Dimensions
        \draw [thick,<->] (-8,3.5,-2) -- (8,3.5,-2) ;
        \draw [thick,<->]  (-8.5,-3,2) -- (-8.5,3,2) ; 
        \draw[thick,<->] (-8.3,3,1.8)-- (-8.3,3,-2.3);
        \draw (0,3.5,-2) node[above] {$L$}; 
        \draw (-9,0,2) node {$D$};
        \draw (-8.3,3.1,-0.15) node[left] {$L$};

        %Repere
        \draw[thick, ->] (-9,-3.5,2.5)--(-8,-3.5,2.5);
        \draw (-8,-3.5,2.5) node[right] {$x$};
        \draw[thick, ->] (-9,-3.5,2.5)--(-9,-2.5,2.5);
        \draw (-9,-2.5,2.5) node[above] {$z$};
        \draw[thick, ->] (-9,-3.5,2.5)--(-9,-3.5,3.5);
        \draw (-9,-3.5,4) node {$y$};

        %Phases
        \draw (-7,1.5,0) node {Phase $2$};
        \draw (-7,-1.5,0) node {Phase $1$};
    \end{tikzpicture}
\caption{Flow configuration}
\label{fig:intro_config_ecoulement}
\end{figure}
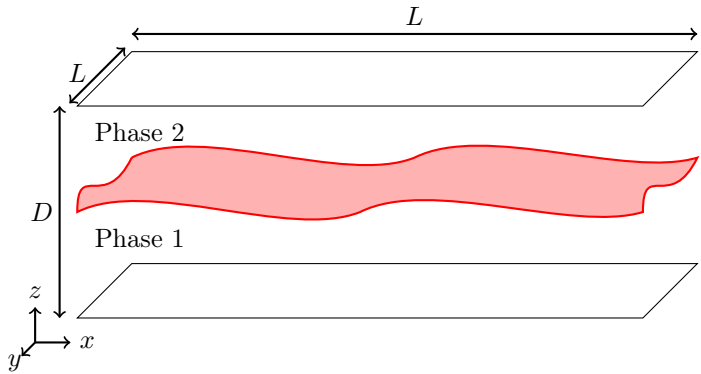

Our work carries a derivation of model~\cref{eq:6eq}-\cref{eq:eq_p}. We start from a thin three-dimensional domain: the vertical length is assumed to be much smaller than the horizontal lengths, the ratio being described by the thickness parameter $\varepsilon=D/L \ll 1$ (see~\cref{fig:intro_config_ecoulement}). The standard averaging process is replaced by an integration with respect to the vertical variable, resulting in a two-dimensional averaged model. In order to obtain a two-velocity model, our derivation circumvents the smoothness of the velocity field by making two complementary choices on the three-dimensional model. First, instead of imposing the continuity of the velocity field through the interfaces between phases, we allow a tangential friction at the interface by prescribing a Navier condition on the relative tangential velocity. Second, in order to concentrate the efforts on the interaction of this interface condition with the averaging process, we study a simplifed configuration: we assume an initial and persistent stratified configuration, in the sense that one fluid remains below the other, and the position of the interface can be parametrized by a function which only depends on the horizontal space variables $(x,y)$. We finally set walls at the bottom and at the top. After rescaling the equations, we perform an asymptotic expansion of the microscopic model with respect to $\varepsilon$ and average the equations to obtain the two-dimensional model~\cref{eq:6eq}-\cref{eq:eq_p} at the principal order. Thanks to this particular setting, it is possible to understand the influence of the different parameters of the initial model on the averaged equations, with respect to the thickness parameter. Compared to the works of Stewart and Wendroff and Ransom and Hicks~\cite{stewart_two-phase_1984,ransom_hyperbolic_1984}, the introduction of Navier conditions and the asymptotic analysis enable the deduction of a two-velocity averaged model without additional heuristic arguments. Moreover, our framework enables to deduce interaction source terms. Since this framework is intrinsically multidimensional, the extension of mathematical homogenization described above seems out of reach for the moment.

This paper is divided into three parts. \Cref{sec:barotropic-flows} introduces the configuration of the flow and the three-dimensional microscopic model in the barotropic case. \Cref{sec:asymptotic_model_barotropic} presents the averaged two-velocity, one-pressure model derived from the equations of the first section and develops the proof of this result. \Cref{sec:Navier-Stokes-Fourier} is dedicated to the extension to the non-barotropic case, therefore adding two energy equations in the three-dimensional model. The resulting averaged model is described by two velocities, one pressure and two temperatures. 

\section{Barotropic two-phase stratified flows}\label{sec:barotropic-flows}

In this section, we detail the configuration of the flow, with a particular care for the description of the stratification. We then introduce the compressible barotropic Navier-Stokes systems which describe the flow. These systems are completed by Navier boundary conditions, which model the friction of the fluids on the top and bottom of the domain. We also prescribe Navier conditions on the relative velocity of the fluids at the interface, which depict the friction between both fluids tangentially to the interface. Finally, we develop the different scalings which take place in our configuration and deduce a dimensionless version of the three-dimensional viscous model. 

\subsection{Stratified configuration and definition of the interface}

We consider a stratified two-phase flow in a thin domain \begin{equation}
    \Omega_D =\{ (\xbf,z)=(x,y,z) \in \T^2_L \times (0,D)\}, \quad L, \, D>0
\label{1_def_domain}
\end{equation} where $\T^2_L$ denotes the two-dimensional torus of period $L$.

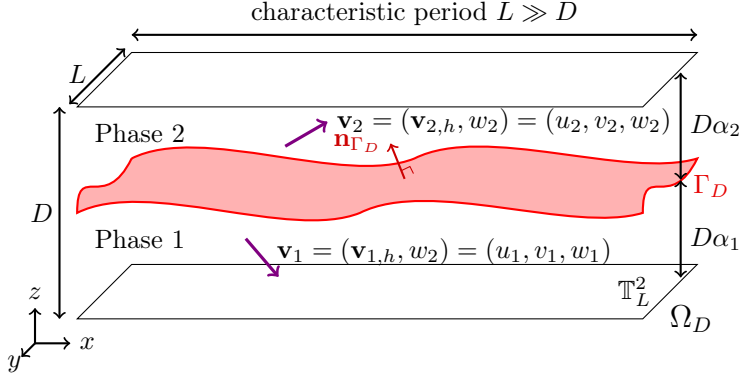
\begin{figure}[!ht]\label{fig1: config ecoulement}
    \centering
    \begin{tikzpicture}[scale=0.47]
        % Tuyau
        \begin{scope}[canvas is zx plane at y=3]
            \draw (-2,-8) rectangle ++(4,16);
        \end{scope}
        \begin{scope}[canvas is zx plane at y=-3]
            \draw (-2,-8) rectangle ++(4,16);
            \draw (0,7) node {$\T_L^2$};
        \end{scope}
        \draw (8.5,-3,2) node[right]{\large $\Omega_D$};

        % Interface

        \path[fill=red!30!white](-8,0,-2) .. controls +(2,1,0) and +(-2,-.8,0).. (0,0,-2)
         .. controls +(2,1,0) and +(-2,-.6,0) .. (8,0,-2)
         .. controls +(0,-.8,2) and +(0,1.5,0.01) ..  (8,0,2)
         ..controls +(-2,-.6,0) and +(2,1,0) ..(0,0,2)
         .. controls +(-2,-.8,0) and +(2,1,0) ..(-8,0,2)
         ..controls +(0,1.5,0.01) and +(0,-.8,2) ..(-8,0,-2);
 
         \draw[thick, color=red](-8,0,-2) .. controls +(2,1,0) and +(-2,-.8,0).. (0,0,-2)
         .. controls +(2,1,0) and +(-2,-.6,0) .. (8,0,-2)
         .. controls +(0,-.8,2) and +(0,1.5,0.01) ..  (8,0,2)
         ..controls +(-2,-.6,0) and +(2,1,0) ..(0,0,2)
         .. controls +(-2,-.8,0) and +(2,1,0) ..(-8,0,2)
         ..controls +(0,1.5,0.01) and +(0,-.8,2) ..(-8,0,-2);
 
        \draw [thick,->,color=red!80!black] (0.3,0,-0.5) -- (-0.1,1, -0.5) ; 
        \draw [color=red!80!black] (-0.1,1,-0.5) node[left] {$\nbf_{\Gamma_D}$};
        \draw [color=red!90!black] (8.4,0,0) node[right] {$\Gamma_D$};
        \draw [color=red!80!black] (0.2,0.23,-0.5)--(0.45,0.35,-0.5)--(0.55,0.15,-0.5);

        % Dimensions
        \draw [thick,<->] (-8,3.5,-2) -- (8,3.5,-2) ;
        \draw [thick,<->]  (-8.5,-3,2) -- (-8.5,3,2) ; 
        \draw[thick,<->] (-8.3,3,1.8)-- (-8.3,3,-2.3);
        \draw (0,3.5,-2) node[above] {characteristic period $L\gg D$}; 
        \draw (-9,0,2) node {$D$};
        \draw (-8.3,3.1,-0.15) node[left] {$L$};

        %Repere
        \draw[thick, ->] (-9,-3.5,2.5)--(-8,-3.5,2.5);
        \draw (-8,-3.5,2.5) node[right] {$x$};
        \draw[thick, ->] (-9,-3.5,2.5)--(-9,-2.5,2.5);
        \draw (-9,-2.5,2.5) node[above] {$z$};
        \draw[thick, ->] (-9,-3.5,2.5)--(-9,-3.5,3.5);
        \draw (-9,-3.5,4) node {$y$};

        %Phases
        \draw (-7,1.5,0) node {Phase $2$};
        \draw (-7,-1.5,0) node {Phase $1$};

        %Vitesses
        \draw [->,color=blue!50!red,very thick] (-2.5,1.5,1) -- (-1.5,2,0.5) ;
        \draw (-1.5,2,0.5) node[right] {$\vbf_2=(\vbf_{2,h},w_2)=(u_2,v_2,w_2)$};
        \draw [->,color=blue!50!red,very thick] (-4,-1.5,0) -- (-2.5,-2,1.5);
        \draw (-3,-1.5,1) node [right] {$\vbf_1=(\vbf_{1,h},w_2)=(u_1,v_1,w_1)$};

        %Alphas
        \draw[thick,<->] (8.3,3.2,0)--(8.3,0.17,0);
        \draw (8.3,1.7,0) node[right] {$D\alpha_2$};
        \draw[thick,<->] (8.3,0.15,0)--(8.3,-2.6,0);
        \draw (8.3,-1.5,0) node[right] {$D\alpha_1$};
    \end{tikzpicture}
    \caption{Flow configuration}
\end{figure}

The model we study represents a compressible two-phase flow where the fluids are supposed immiscible. The domain filled by each fluid is defined thanks to the top and bottom boundary and the interface between each fluid. We assume that the interface $\Gamma_D$ can be described by the use of continuous function $\alpha_1 : \R_+ \times \R_L^2 \to (0,1)$ in the following way:
\begin{equation}
    \forall t \in \R_+, \qquad \Gamma_D(t)=\{ (\xbf,D\alpha_1(t,\xbf)), \; \xbf \in \T_L^2\}.
    \label{1 def interface}
\end{equation}
The existence of $\alpha_1$ ensures that the stratified configuration is preserved through time. Let us define a velocity field $\vbf_i:\R_+ \times \R^2 \rightarrow \R^3$ which describes the velocity of particles located at the interface. The displacement of the latter is linked to the component of $\vbf_i$ which is normal to the interface thanks to the kinematic equation
\begin{equation}
    \partial_t (D\alpha_1) - \vbf_i \cdot \Big(\sqrt{\Norm{\nabla_h(D\alpha_1)}^2+1}\Big) \nbf_{\Gamma_D}=0 \quad \text{in } \R_+ \times \T_L^2,
\label{1_fraction}
\end{equation}

\begin{flalign*}
    \text{where} && \Big(\sqrt{\Norm{\nabla_h(D\alpha_1)}^2+1}\Big) \nbf_{\Gamma_D}&= \trans{\begin{pmatrix}
    -\nabla_h(D\alpha_1) & 1
    \end{pmatrix}}&&
\end{flalign*}
is the unitary normal vector to the interface pointing upwards (see Figure~\cref{fig1: config ecoulement}) and $\nabla_h$ is the horizontal gradient operator $\begin{pmatrix}
\partial_x & \partial_y
\end{pmatrix}^T$. 

The domain $\Omega_{D,1}(t)$ filled by the first phase is described by the characteristic function 
\begin{equation*}
    \forall \xbf \in \T_L^2, \qquad \chi_{D,1}(t,\xbf,z)=\left\{\begin{array}{rl}
    1 & \text{for } 0< z < D \alpha_1(t,\xbf),\\
    0 & \text{otherwise.}
    \end{array}\right.
\end{equation*}

\noindent Similarly, the domain $\Omega_{D,2}(t)$ filled by the second phase is described by the characteristic function $\chi_{D,2}=1-\chi_{D,1}$. 

\subsection{The compressible barotropic stratified model}

Let $k \in \{1,2\}$. Fluid $k$ is described by its density $\rho_k: \R_+^* \times \Omega_{D,k} \rightarrow \R_+$, its velocity $\vbf_k=(u_k,v_k,w_k) : \R_+^* \times \Omega_{D,k} \rightarrow \R^3$ and its pressure $p_k : \R_+^* \times \Omega_{D,k} \rightarrow \R$. Since we consider viscous flows, we denote $\mu_k$ and $\lambda_k$ the shear and bulk viscosity of the fluid. This flow is described by two compressible, barotropic and viscous Navier-Stokes systems --- one for each fluid.
Let $h_0=0$, $h_1=\alpha_1$ and $h_2=1$. We consider the following system for \[ k\in \{ 1,2\}, \quad t>0, \quad (\xbf,z) \in \Omega_{D,k}(t)=\T_L^2\times (D h_{k-1}(t,\xbf),D h_k(t,\xbf)):\]

\begin{align}
    \partial_t \rho_k + \nabla \cdot (\rho_k \vbf_k)&=0,\label{1_NS_cm}\\
    \partial_t (\rho_k \vbf_k) + \nabla \cdot (\rho_k \vbf_k \otimes \vbf_k) + \nabla p_k &= \nabla \cdot \S_k,\label{1_NS_qdm}\\
    p_k&=p_k(\rho_k)\label{1_barotropy_laws}
\end{align}
where the viscous tensor $\S_k : \R_+^* \times \Omega_{D,k} \rightarrow \mcal_3(\R)$ associated with fluid $k$ is of the form 
\begin{equation}
    \S_k = 2\mu_k \ubrace{\D_k}{\frac{(\nabla \otimes \vbf_k + {(\nabla \otimes \vbf_k)}^T)}{2}} + \lambda_k (\nabla \cdot \vbf_k) \I
\label{1_def_viscous_tensor}
\end{equation} 
and the equations of state~\cref{1_barotropy_laws} are regular and increasing. Equation~\cref{1_NS_cm} illustrates the conservation of mass. The next one is the momentum equation, which is obtained thanks to Newton's second principle.

\begin{remark}
    In the equations above, we did not include any effects of gravity or surface tension. Their effects may help to prove the preservation of the stratified configuration and generally influence the stability of the configuration~\cite{ramshaw_characteristics_1978}. Moreover, we assume in all this work that the systems we study admit strong solutions.  
\end{remark}

To complete the system description, boundary and interface conditions are necessary. Denoting $\vbf_{k,h}$ the horizontal component of $\vbf_k$ for $k \in \{1,2\}$, and
\[ \S_k = \begin{pmatrix}
\S_{k,h}\\
\S_{k,z}
\end{pmatrix}=\left(\begin{array}{c|c}
\S_{k,hh} & \S_{k,hz}\\ \hline
\S_{k,zh} & \S_{k,zz}
\end{array}\right), \text{ where } \S_{k,hh} \in \mcal_{2,2}(\R),\]
we prescribe Navier boundary conditions on the top and bottom of the domain: 
\begin{equation}
    \left\{ \begin{aligned}
    w_1|_{z=0} &=& 0,\\
    -\S_{1,hz} +\kappa_1 \vbf_{1,h} |_{z=0}&=&0,
    \end{aligned}\right.
\label{1_Navier_bottom}
\end{equation}

\begin{equation}
    \left\{ \begin{aligned}
    w_2|_{z=D} &=& 0,\\
    \S_{2,hz} + \kappa_2 \vbf_{2,h} |_{z=D}&=&0.
    \end{aligned}\right.
\label{1_Navier_top}
\end{equation}
The first equation of each boundary condition depicts the impermeability of the domain. The second one describes the horizontal friction of the fluid on the boundary. The quantities $\kappa_1$ and $\kappa_2$ are constant friction coefficients at the bottom and top boundary respectively. 

\medskip 

There remains to prescribe interface conditions. Defining the total stress tensor 
\begin{equation}
    \sigma_k = (-p_k+\lambda_k (\nabla \cdot \vbf_k))\I + 2\mu_k \D_k=-p_k \I + \S_k,
\label{1_def_total_stress_tensor}
\end{equation}
we prescribe the continuity of its normal component to the interface:
\begin{equation}
    \sigma_1 \nbf_{\Gamma_D} |_{z=D\alpha_1} = \sigma_2 \nbf_{\Gamma_D} |_{z=D\alpha_1}.
\label{1_tensor_interface}
\end{equation}
In addition to the previous condition, we prescribe Navier conditions at the interface. They read
\begin{equation}
    \left\{ \begin{aligned}
    \forall k \in \{1,2 \}, \qquad (\vbf_k-\vbf_i) \cdot \nbf_{\Gamma_D}|_{z=D\alpha_1} &= 0\\
    {(\S_1 \nbf_{\Gamma_D} + \kappa_i (\vbf_1-\vbf_2))}_{\rm tan}|_{z=D\alpha_1}&=0,
    \end{aligned}\right.
\label{1_Navier_interface}
\end{equation}
where $X_{\rm tan}|_{z=D\alpha_1}$ denotes the component of the vector $X$ which is tangential to the interface and $\kappa_i$ is the friction coefficient at the interface. The first equation expresses the continuity of the velocity in the normal direction to the interface. The second one describes the friction of the fluids tangentially to the interface.

The orthogonal component of the Navier interface condition~\cref{1_Navier_interface} enables us to rewrite the fraction equation~\cref{1_fraction} as
\begin{equation}
    \partial_t(D\alpha_1)+\vbf_{1,h}(D\alpha_1) \cdot \nabla_h (D \alpha_1)=w_1(D\alpha_1).
\label{1_fraction_advected_form}
\end{equation}

\begin{remark}\label{remark 1: tangential stress}
    There is no need of any piece of information about the tangential component of $\vbf_i$, the normal component being the only useful one in this paper (see~\cref{1_fraction}). Looking at the tangential component of equation~\cref{1_tensor_interface}, using the fact that ${((-p_k+\lambda_k (\nabla \cdot \vbf_k))\nbf_{\Gamma_D})}_{\rm tan}|_{z=D\alpha_1}=0$ for $k\in \{1,2\}$, we have 
    \begin{equation}
        {(\S_1 \nbf_{\Gamma_D})}_{\rm tan} |_{z=D\alpha_1} = {(\S_2 \nbf_{\Gamma_D})}_{\rm tan} |_{z=D\alpha_1}.
    \label{1_tangential_stress_symmetry}
    \end{equation}
    We can then rewrite the second equation of the interface condition~\cref{1_Navier_interface} with $\S_2$ instead of $\S_1$.
\end{remark}

\begin{remark}
    We do not introduce different notations for the scalar products in two and three dimensions. The context should make the meaning of the scalar product clear.
\end{remark}

\begin{remark}\label{rmk:about_interface_conditions}
    The choice of the interface condition~\cref{1_Navier_interface} has a lot of implications on the model that is derived thanks to dimension reduction. For example, an alternative possibility is the prescription of a usual Dirichlet condition with respect to the relative velocity, i.e the continuity of the velocity field at the interface:
    \[ \vbf_1(D\alpha_1)=\vbf_2(D\alpha_1).\]
\end{remark}

\begin{remark}
    The domain occupied by each fluid being advected by the flow, we have the equation
    \begin{equation}
        \forall k \in \{1,2\}, \qquad \partial_t \chi_{D,k} + \vbf_k \cdot \nabla \chi_{D,k}= 0.
    \label{1_advection_domain}
    \end{equation}
    Thanks to the Navier interfacial condition, the previous equation is compatible with the fraction equation~\cref{1_fraction}. Indeed, integrating equation~\cref{1_advection_domain} for the first fluid over $\Omega_D$ yields
    \[ \int_{\Omega_D} (\partial_t \chi_{D,1} + \vbf_1 \cdot \nabla \chi_{D,1}) =0.\] 
    \begin{flalign*}
        \text{Firstly,} && \int_{\Omega_D} \partial_t \chi_{D,1}&=\int_{\T_L^2}\partial_t \int_{(0,D)} \chi_{D,1} = \int_{\T_L^2} \partial_t (D\alpha_1)&& \\
        \text{Secondly,} && \int_{\Omega_D} \vbf_1 \cdot \nabla \chi_{D,1} &= \int_{\Omega_D} \div(\vbf_1 \chi_{D,1}) - \int_{\Omega_{D,1}} \div(\vbf_1).&&
    \end{flalign*}
    Thanks to the divergence theorem and the impermeability condition~\cref{1_Navier_bottom},
    \[\int_{\Omega_D} \vbf_1 \cdot \nabla \chi_{D,1} = - \int_{\Gamma_D} \vbf_1\cdot \nbf_{\Gamma_D} \d S .\]
    Since the surface $\Gamma_D$ is parametrized by the function $\varphi(x,y)=(x,y,D\alpha_1(x,y))$, a change of variables gives
    \[ \int_{\Gamma_D} \vbf_1\cdot \nbf_{\Gamma_D} \d S = \int_{\T_L^2} (\vbf_{1,h}(D\alpha_1)\cdot \nabla_h (D\alpha_1) -w_1(D\alpha_1))(x,y) \d x \d y =0.\]
    The normal component of the Navier interface condition~\cref{1_Navier_interface} then gives the compatibility between equations~\cref{1_advection_domain} and~\cref{1_fraction_advected_form}.
    The reasoning is the same for the second fluid.
\end{remark}

\subsection{Rescaling the equations in the thin domain assumption}

Let $\varepsilon=D/L$ be a small parameter corresponding to the thin domain assumption. We then introduce dimensionless variables $\tilde{x}=x/L$, $\tilde{y}=y/L$ and $\tilde{z}=z/D=z/(\varepsilon L)$. Denoting $T$ the characteristic time of the flow, we also introduce $\tilde{t}/T$. Denoting $\T^2=\T_1^2$, switching the variables allows to study the flow on the rescaled domain 
\begin{equation}
    \Omega = \T^2 \times (0,1).
\end{equation}
In the same spirit, we introduce characteristic dimensions for every unknown in the system: $U=L/T$ for the horizontal velocity, $W=\varepsilon U$ for the vertical velocity, $R$ for the density, $P=RU^2$ for the pressure, $M=RUL$ for the viscosity, and $K=RU$ for the friction coefficients. Denote the dimensionless unknowns $\tilde{u}=u/U$, $\tilde{w}=w/W$, $\tilde{\rho}=\rho/R$, $\tilde{p}=p/P$. Denote $\tilde{\mu}=\mu/M$ the inverse of the Reynolds number, $\tilde{\lambda}=\lambda/M$ and $\tilde{\kappa}=\kappa/K$. We state that the dimensionless viscosity and friction coefficients are constant parameters which may depend on $\varepsilon$. Moreover, we assume that their scaling is independent of the fluid they characterize.

Denote $\chi_k$ the rescaled version of $\chi_{D,k}$ for $k \in \{1,2\}$, $\Omega_k$ the rescaled version of $\Omega_{D,k}$ and $\Gamma$ the rescaled version of $\Gamma_D$. Removing the $\tilde{\cdot}$ notation for the sake of clarity, and denoting $\Delta_h =\nabla_h \cdot \nabla_h$, we rewrite the equations~\cref{1_NS_cm}-\cref{1_advection_domain} as follows: 

\begin{lemma}\label{Lemma 1}
    The rescaling of model~\cref{1_NS_cm}-\cref{1_advection_domain} results in the following set of equations on the rescaled domain $\Omega$.

    \begin{itemize}
        \item \textbf{Dimensionless advection of the domain}
        \begin{equation}
            \partial_t \chi_k + \vbf_k \cdot \nabla \chi_k =0.
        \label{1_rescaled_advection_domain}
        \end{equation}
        \item \textbf{Dimensionless fraction equation}
        \begin{equation}
            \partial_t \alpha_1 + \vbf_{1,h}(\alpha_1) \cdot \nabla_h \alpha_1 = w_1 (\alpha_1).
        \label{1_rescaled_fraction}
        \end{equation}

        \item \textbf{Dimensionless mass conservation equations}
        \begin{equation}
            \forall k \in \{1,2\}, \qquad \partial_t \rho_k  + \nabla_h \cdot (\rho_k  \vbf_{k,h} )+\partial_z (\rho_k  w_k )=0,
        \label{1_rescaled_cm}
        \end{equation}

        \item \textbf{Dimensionless horizontal momentum equations}
        \begin{multline}
            \forall k \in \{1,2\}, \quad  \partial_t (\rho_k  \vbf_{k,h} ) + \nabla_h \cdot (\rho_k  \vbf_{k,h}  \otimes \vbf_{k,h} )+ \partial_z(\rho_k  w_k  \vbf_{k,h} )+ \nabla_h p_k \\
            =(\lambda_k+\mu_k)\nabla_h(\nabla \cdot \vbf_{k} )
            + \mu_k \Delta_h \vbf_{k,h}  
            + \frac{\mu_k}{\varepsilon^2} \partial_{zz}\vbf_{k,h} ,
        \label{1_rescaled_qdm_h}
        \end{multline}

        \item \textbf{Dimensionless vertical momentum equations}
        \begin{multline}
            \varepsilon(\partial_t (\rho_k  w_k )+\nabla_h \cdot (\rho_k  \vbf_{k,h}w_k)+\partial_z (\rho_k  {(w_k )}^2))+\frac{1}{\varepsilon}\partial_z p_k  \\
            = \frac{1}{\varepsilon}((\lambda_k+\mu_k)\partial_z (\nabla  \cdot \vbf_k )+ \mu_k\partial_{zz} w_k )+\mu_k\varepsilon \Delta_h w_k .
        \label{1_rescaled_qdm_z}
        \end{multline}

        \item \textbf{Dimensionless Navier boundary conditions}
        \begin{equation}
            \left\{ \begin{aligned}
            w_1 |_{z=0} &=& 0,\\
            -\frac{\mu_1}{\varepsilon}\partial_z \vbf_{1,h} +\kappa_1 \vbf_{1,h}  |_{z=0}&=&0,
            \end{aligned}\right.
        \label{1_rescaled_bottom}
        \end{equation}

        \begin{equation}
            \left\{ \begin{aligned}
            w_2 |_{z=1} &=& 0,\\
            \quad \frac{\mu_2}{\varepsilon}\partial_z \vbf_{2,h} 
            + \kappa_2 \vbf_{2,h}  |_{z=1}&=&0.
            \end{aligned}\right.
        \label{1_rescaled_top}
        \end{equation}

        \item \textbf{Dimensionless continuity of the normal stress at the interface}
        \begin{align}
        \begin{split}
            \Big( \partial_x \alpha_1 (p_1 -\lambda_1\nabla \cdot \vbf_1 -2\mu_1 \partial_x u_1 ) -\mu_1 \partial_y \alpha_1 (\partial_y u_1 +\partial_x v_1 ) &\\
            +\mu_1 \partial_x w_1  + \frac{\mu_1}{\varepsilon^2} \partial_z u_1 \Big)\Big|_{z=\alpha_1}&={(\cdots)}_2\big|_{z=\alpha_1},
        \end{split}
        \label{1_rescaled_tensor_x}\\
        \begin{split}
            \Big( \partial_y \alpha_1 (p_1 -\lambda_1\nabla \cdot \vbf_1 -2\mu_1 \partial_y v_1 )-\mu_1 \partial_x \alpha_1 (\partial_y u_1 +\partial_x v_1 )&\\
            +\mu_1 \partial_y w_1  + \frac{\mu_1}{\varepsilon^2} \partial_z v_1 \Big)\Big|_{z=\alpha_1}&={(\cdots)}_2\big|_{z=\alpha_1},
        \end{split}
        \label{1_rescaled_tensor_y}\\
        \begin{split}
            \Big( -p_1 +\lambda_1\nabla \cdot \vbf_1 +2 \mu_1\partial_z w_1  - \mu_1 \partial_x \alpha_1(\partial_z u_1  + \varepsilon^2 \partial_x w_1 )&\\
            -\mu_1 \partial_y \alpha_1(\partial_z v_1 +\varepsilon^2 \partial_y w_1 )\Big)\Big|_{z=\alpha_1}
            &={(\cdots)}_2\big|_{z=\alpha_1},
        \end{split}
        \label{1_rescaled_tensor_z}
        \end{align}
        where ${(\cdots)}_2$ denotes the same expression as the left-hand side of every condition with the subscript $1$ replaced by $2$, except for $\alpha_1$.

        \item \textbf{Dimensionless normal Navier interface condition}
        \begin{equation}
            \Big(-u_1 \partial_x \alpha_1-v_1  \partial_y \alpha_1+w_1 \Big)\Big|_{z=\alpha_1} = \Big(-u_2 \partial_x \alpha_1-v_2  \partial_y \alpha_1+w_2 \Big)\Big|_{z=\alpha_1}
        \label{1_rescaled_navier_orth}
        \end{equation}

        \item \textbf{Dimensionless tangential Navier interface condition}
        \begin{align}
        \begin{split}
            \mu_1 \varepsilon \Big( \frac{1}{\varepsilon^2}\partial_z u_1  + \partial_x w_1  -2 \partial_x \alpha_1(\partial_x u_1 -\partial_z w_1 ) - \partial_y \alpha_1 (\partial_y u_1 +\partial_x v_1 ) &\\
            - \partial_x \alpha_1 \partial_y \alpha_1 (\partial_z v_1  + \varepsilon^2 \partial_y w_1 ) - {(\partial_x \alpha_1)}^2 (\partial_z u_1 +\varepsilon^2 \partial_x w_1 )\Big)& \\
            + \kappa_i (u_1 -u_2 +\varepsilon^2 \partial_x \alpha_1 (w_1 -w_2 ))\Big|_{z=\alpha_1} &=0,
        \end{split} 
        \label{1_rescaled_navier_tan_x}\\
        \begin{split}
            \mu_1 \varepsilon \Big( \frac{1}{\varepsilon^2}\partial_z v_1  + \partial_y w_1  -2 \partial_y \alpha_1(\partial_y v_1 -\partial_z w_1 ) - \partial_x \alpha_1 (\partial_x v_1 +\partial_y u_1 ) &\\
            - \partial_y \alpha_1 \partial_x \alpha_1 (\partial_z u_1  + \varepsilon^2 \partial_x w_1 ) - {(\partial_y \alpha_1)}^2 (\partial_z v_1 +\varepsilon^2 \partial_y w_1 )\Big)& \\
            + \kappa_i (v_1 -v_2 +\varepsilon^2 \partial_y \alpha_1 (w_1 -w_2 ))\Big|_{z=\alpha_1} &=0.
        \end{split}
        \label{1_rescaled_navier_tan_y}
        \end{align}
    \end{itemize}
\end{lemma}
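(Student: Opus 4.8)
\textit{Proof plan.} The statement is a pure change of variables: it follows by transporting the operators $\partial_t=T^{-1}\partial_{\tilde t}$, $\nabla_h=L^{-1}\tilde\nabla_h$, $\partial_z=(\varepsilon L)^{-1}\partial_{\tilde z}$ and the fields ($u=U\tilde u$, $w=\varepsilon U\tilde w$, $\rho=R\tilde\rho$, $p=P\tilde p$, $\mu=M\tilde\mu$, $\kappa=K\tilde\kappa$, etc.) into \cref{1_NS_cm}--\cref{1_advection_domain}, and then dividing each equation by a common dimensional factor so as to expose the powers of $\varepsilon$. The whole computation is driven by two structural features. First, a vertical derivative gains a factor $\varepsilon^{-1}$ relative to a horizontal one, since $\partial_z=(\varepsilon L)^{-1}\partial_{\tilde z}$ while $\nabla_h=L^{-1}\tilde\nabla_h$, whereas the vertical velocity loses a factor $\varepsilon$ through $w=\varepsilon U\tilde w$. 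Second, the rescaled interface $\{\tilde z=\alpha_1\}$ is almost horizontal, its upward unnormalized normal being $(-\varepsilon\nabla_h\alpha_1,\,1)^{\mathrm T}$, so that the horizontal components of $\nbf_\Gamma$ carry an intrinsic factor $\varepsilon$. Essentially every $\varepsilon$ in the conclusion is a bookkeeping consequence of combining these two facts.

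I would first dispatch the transport equations. The domain-advection equation \cref{1_advection_domain} and the mass balance \cref{1_NS_cm} are scale invariant under their natural normalizations ($1/T$ and $R/T$): writing $\partial_z(\rho_k w_k)=(\varepsilon L)^{-1}RW\,\partial_{\tilde z}(\tilde\rho_k\tilde w_k)=(R/T)\partial_{\tilde z}(\tilde\rho_k\tilde w_k)$, one sees that the vertical flux matches the horizontal flux $\nabla_h\cdot(\rho_k\vbf_{k,h})\sim R/T$ precisely because $W=\varepsilon U$ cancels the $\varepsilon^{-1}$ from $\partial_z$; this yields \cref{1_rescaled_cm} and \cref{1_rescaled_advection_domain}. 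The rescaled fraction equation \cref{1_rescaled_fraction} follows identically from \cref{1_fraction_advected_form} after dividing by $\varepsilon U$, since $\partial_t(D\alpha_1)$, $\vbf_{1,h}\cdot\nabla_h(D\alpha_1)$ and $w_1$ all scale as $\varepsilon U$.

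Next I would expand the viscous term via $\nabla\cdot\S_k=\mu_k\Delta\vbf_k+(\lambda_k+\mu_k)\nabla(\nabla\cdot\vbf_k)$ and split $\Delta=\Delta_h+\partial_{zz}$. Dividing both momentum equations by the single factor $RU^2/L$ gives the stated forms: in the horizontal equation \cref{1_rescaled_qdm_h} the convective, pressure and $\Delta_h$ terms sit at $O(1)$, while $\partial_{zz}\vbf_{k,h}$ is promoted to $\mu_k\varepsilon^{-2}\partial_{zz}\vbf_{k,h}$; in the vertical equation \cref{1_rescaled_qdm_z} the inertial terms are demoted to $O(\varepsilon)$ (because $w=\varepsilon U\tilde w$), whereas $\partial_z p_k$ and the $\partial_z(\nabla\cdot\vbf_k)$, $\partial_{zz}w_k$ viscous terms are promoted to $O(\varepsilon^{-1})$, the transverse diffusion $\mu_k\Delta_h w_k$ staying at $O(\varepsilon)$. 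The wall conditions \cref{1_rescaled_bottom}--\cref{1_rescaled_top} come from dividing \cref{1_Navier_bottom}--\cref{1_Navier_top} by the stress scale $RU^2$: the shear $\S_{k,hz}=\mu_k(\partial_z\vbf_{k,h}+\nabla_h w_k)$ contributes its dominant part $\mu_k\varepsilon^{-1}\partial_z\vbf_{k,h}$ against the $O(1)$ friction $\kappa_k\vbf_{k,h}$.

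The interface relations are where the real work lies, and the tangential Navier condition \cref{1_rescaled_navier_tan_x}--\cref{1_rescaled_navier_tan_y} is the main obstacle. For the normal-stress balance \cref{1_tensor_interface} I would assemble $\sigma_k\nbf_\Gamma$ component by component using the tilted normal, noting that the dominant shear entry $\sigma_{k,xz}\sim RU^2\varepsilon^{-1}$ multiplied by the $O(\varepsilon)$ horizontal normal produces an $O(1)$ contribution; normalizing the two horizontal components by $\varepsilon RU^2$ (which turns $\varepsilon\partial_x\alpha_1$ into $\partial_x\alpha_1$ and $\varepsilon^{-1}\partial_z u$ into $\varepsilon^{-2}\partial_z u$) and the vertical component by $RU^2$ gives exactly \cref{1_rescaled_tensor_x}--\cref{1_rescaled_tensor_z}, while \cref{1_rescaled_navier_orth} is merely $\vbf_1\cdot\nbf_\Gamma=\vbf_2\cdot\nbf_\Gamma$ divided by $\varepsilon U$. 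The hard step is the tangential condition: one must apply the projector $X\mapsto X-(X\cdot\nbf_\Gamma)\nbf_\Gamma$ to $\S_1\nbf_\Gamma$, so that the normal enters quadratically and generates all the cross-terms $\partial_x\alpha_1\partial_y\alpha_1(\partial_z v_1+\varepsilon^2\partial_y w_1)$, $(\partial_x\alpha_1)^2(\partial_z u_1+\varepsilon^2\partial_x w_1)$, and so on. The delicate points are tracking how each $\nabla_h\alpha_1$ drawn from the projector (factor $\varepsilon$) pairs with a $\partial_z$ derivative (factor $\varepsilon^{-1}$) or with a horizontal one, and choosing the overall normalization $\mu_1\varepsilon$ so that the leading transverse shear $\varepsilon^{-2}\partial_z u_1$ survives and balances the friction $\kappa_i(u_1-u_2)$; once the projector is expanded in the rescaled variables this is a finite, if lengthy, collection of terms, and \cref{remark 1: tangential stress} justifies writing the condition with $\S_1$ evaluated at the interface.
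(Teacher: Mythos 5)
Your proposal is correct in substance and follows the same overall strategy as the paper: chain-rule transport of the operators and fields, term-by-term normalization, and a separate, more careful treatment of the interface conditions. The one place where your route genuinely differs is the tangential Navier condition, and the difference matters for landing on the exact form stated in the lemma. The paper does not expand the projector $X\mapsto X-(X\cdot\nbf_\Gamma)\nbf_\Gamma$; instead it introduces the two unnormalized tangent vectors $\tbf_\Gamma^x=(1,0,\varepsilon\partial_x\alpha_1)^{\mathrm T}$ and $\tbf_\Gamma^y=(0,1,\varepsilon\partial_y\alpha_1)^{\mathrm T}$ and writes the condition as $(\S_1\nbf_{\Gamma}+\kappa_i(\vbf_1-\vbf_2))\cdot\tbf_\Gamma^{x,y}=0$. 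This is equivalent to your projector formulation (a vector has vanishing tangential part iff it is orthogonal to both tangent vectors), but it is cleaner for two reasons: the normalization factor $(1+\varepsilon^2\norm{\nabla_h\alpha_1}^2)^{-1/2}$ multiplies the whole scalar equation and drops out, and the quadratic terms $\partial_x\alpha_1\partial_y\alpha_1(\partial_z v_1+\varepsilon^2\partial_y w_1)$, ${(\partial_x\alpha_1)}^2(\partial_z u_1+\varepsilon^2\partial_x w_1)$ arise simply from pairing the third row of $\S_1\nbf_\Gamma$ (one factor of $\nabla_h\alpha_1$ from the normal) with the third component of the tangent vector (a second factor $\varepsilon\nabla_h\alpha_1$), not from a term quadratic in $\nbf_\Gamma$. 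If you instead write out the three components of the projected vector, you obtain equations that are linear combinations of~\cref{1_rescaled_navier_tan_x}--\cref{1_rescaled_navier_tan_y} carrying extra $\varepsilon^2{(\nabla_h\alpha_1)}^2$ contributions and the normalization denominator; to recover the stated equations literally you would still have to contract with the tangent basis, i.e., redo what the paper does. Also note that the bulk part $\lambda_1(\nabla\cdot\vbf_1)\I\,\nbf_\Gamma$ is annihilated by the tangent contraction since $\nbf_\Gamma\cdot\tbf_\Gamma^{x,y}=0$, which is why no $\lambda_1$ appears in the tangential conditions.

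One further imprecision to correct: for the wall conditions you say the shear $\S_{k,hz}=\mu_k(\partial_z\vbf_{k,h}+\nabla_h w_k)$ ``contributes its dominant part.'' The lemma is an exact rescaling, with no error terms (the dropping of small terms only happens later, in the asymptotic simplification step), so you cannot discard $\nabla_h w_k$ as subdominant here. The correct observation is that $w_k$ vanishes identically on the wall by the impermeability condition in~\cref{1_Navier_bottom}--\cref{1_Navier_top}, hence $\nabla_h w_k|_{z=0}=0$ (resp. $z=1$) exactly, and~\cref{1_rescaled_bottom}--\cref{1_rescaled_top} hold with no approximation.
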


\begin{proof}
    The introduction of the dimensionless variables and unknowns in the various equations, boundary conditions and interface conditions is performed using the chain rule. The characteristic function $\chi_k$ is advected by the dimensionless velocity, hence equation~\cref{1_rescaled_advection_domain}. The rescaling procedure immediately yields the fraction equation~\cref{1_rescaled_fraction}, the mass conservation equation~\cref{1_rescaled_cm} and the momentum equation, which we split into two components --- the horizontal component~\cref{1_rescaled_qdm_h} and the vertical component~\cref{1_rescaled_qdm_z}.

    Rescaling the boundary conditions yields equations~\cref{1_rescaled_bottom} and~\cref{1_rescaled_top}. Since the normal vector to the interface is now 
    \begin{equation}
        \nbf_{\Gamma}=\frac{1}{\sqrt{1+\varepsilon^2 |\nabla_h \alpha_1|^2}}
        \begin{pmatrix}
        -\varepsilon \nabla_h \alpha_1\\
        1
        \end{pmatrix},
    \label{1_rescaled_normal}
    \end{equation}
    the rescaling of equation~\cref{1_tensor_interface} detailed into every component gives~\cref{1_rescaled_tensor_x},~\cref{1_rescaled_tensor_y} and~\cref{1_rescaled_tensor_z}.

    Now, the Navier interface conditions must be rescaled. Using~\cref{1_rescaled_normal}, we can rescale the continuity of the velocity orthogonally to the interface and thus obtain~\cref{1_rescaled_navier_orth}. 
    The tangential condition requires more computations. First, let us introduce two tangential vectors to the interface: 
    \begin{equation*}
        \tbf_{\Gamma_D}^x = \begin{pmatrix}
        1\\0\\ \partial_x (D\alpha_1)
        \end{pmatrix} \quad \text{and} \quad \tbf_{\Gamma_D}^y = \begin{pmatrix}
        0\\1\\ \partial_y (D\alpha_1)
        \end{pmatrix}
    \end{equation*}
    \noindent Those two are rescaled as
    \[ \tbf_\Gamma^x = \begin{pmatrix}
    1\\0\\ \varepsilon \partial_x \alpha_1
    \end{pmatrix} \quad \text{and} \quad \tbf_{\Gamma}^y = \begin{pmatrix}
    0\\1\\ \varepsilon \partial_y \alpha_1
    \end{pmatrix}.\]
    \noindent Then, the tangential component of condition~\cref{1_Navier_interface} is equivalent to the system
    \begin{equation*}
        \begin{split}
            (\S_1 \nbf_{\Gamma_D} + \kappa_i (\vbf_1-\vbf_2))\cdot \tbf_{\Gamma_D}^x (D\alpha_1)&=0,\\
            (\S_1 \nbf_{\Gamma_D} + \kappa_i (\vbf_1-\vbf_2))\cdot \tbf_{\Gamma_D}^y (D\alpha_1)&=0.
        \end{split}
    \end{equation*}
    which is rescaled as~\cref{1_rescaled_navier_tan_x}-\cref{1_rescaled_navier_tan_y}.
\end{proof}

\section{The asymptotic system in the barotropic case}\label{sec:asymptotic_model_barotropic}

In this section, we present the asymptotic model derivated from the compressible barotropic double Navier-Stokes system~\cref{1_NS_cm}-\cref{1_fraction_advected_form}. The first subsection introduces the fundamental notion of average that will be used later, and states the theorem presenting the asymptotic model. In a second part, we develop the derivation process.
\subsection{Presentation of the averaged model}

Before presenting our asymptotic model, we need to introduce some notations. As explained earlier, integrating the Navier-Stokes equations causes the appearance of averaged unknowns. Since the majority of the terms in the Navier-Stokes equations are nonlinear in the compressible setting and since the multiplication and the integration do not commute, it is necessary to define different notions of averages.

\begin{definition}[Averages]
    Let $f$ be a function defined on the domain filled by fluid $k \in \{1,2\}$. The average of $f$, which we denote $\bar{f}$, is defined by
    \begin{equation}
        \forall (t,\xbf) \in \R_+ \times \T^2, \qquad \bar{f}(t,\xbf)=\frac{1}{\alpha_k}\int_{h_{k-1}}^{h_k} f(t,\xbf,z) \d z.
    \end{equation}
    The Favre average of $f$ with respect to $\rho_k$, which we denote $\favre{f}$, is defined for all $(t,\xbf) \in \R_+ \times \T^2$ by 
    \begin{equation}
        \favre{f}(t,\xbf)=\frac{\bar{\rho_k f}}{\bar{\rho_k}} = \frac{1}{\alpha_k \bar{\rho_k}}\int_{h_{k-1}}^{h_k} \rho_k f(t,\xbf,z) \d z.
    \end{equation}
\end{definition}

\begin{theorem}\label{thm_MR}
    Assume the following scalings for viscosity and friction coefficients:
    \begin{equation}
        \forall k\in \{1,2\}, \qquad \left\{ \begin{array}{rll}
        \mu_k=\widehat{\mu_k}\varepsilon^\tau, &\lambda_k=\widehat{\lambda_k}\varepsilon^\tau, & 0<\tau < 2 ; \\
        \kappa_{i}=\widehat{\kappa_i} \varepsilon, & \kappa_k=\widehat{\kappa_k}\varepsilon^\xi, & \xi\geqslant 1, 
        \end{array}\right.
    \label{MR_profile_viscosity_friction}
    \end{equation}
    where the coefficients $\widehat{\mu_k}$, $\widehat{\lambda_k}$, $\widehat{\kappa_i}$ and $\widehat{\kappa_k}$ are positive constants independent of $\varepsilon$.

    \noindent Define
    \[ \delta_\xi = \left\{ \begin{array}{rl}
        1 & \text{if } \xi =1,\\
        0 & \text{if } \xi >1.
    \end{array}\right.\]

    \noindent Let $(\alpha_1,\rho_1,\vbf_1,p_1,\alpha_2,\rho_2,\vbf_{2},p_2)$ be a strong solution to the Navier-Stokes system with Navier boundary and interface conditions~\cref{1_NS_cm}-\cref{1_fraction_advected_form}.

    \noindent The functions $(\alpha_1,\bar{\rho_1},\favre{\vbf_{1,h}},\bar{p_1},\alpha_2,\bar{\rho_2},\favre{\vbf_{2,h}},\bar{p_2})$ satisfy the following closed system:
    \begin{itemize}
    \item \textbf{Averaged mass conservation equations}
    \begin{equation}
        \forall k \in \{1,2\}, \qquad \partial_t(\alpha_k \bar{\rho_k}) + \nabla_h \cdot (\alpha_k \bar{\rho_k} \favre{\vbf_{k,h}})=0.
    \label{MR-cm}
    \end{equation}

    \item \textbf{Averaged momentum equations}
    \begin{multline}
        \forall k \in \{1,2\}, \qquad \partial_t (\alpha_k \bar{\rho_k} \favre{\vbf_{k,h}}) + \nabla_h \cdot (\alpha_k \bar{\rho_k} \favre{\vbf_{k,h}} \otimes \favre{\vbf_{k,h}})+ \nabla_h(\alpha_k \bar{p_k})\\
        =p_i \nabla_h \alpha_k -\delta_\xi \widehat{\kappa_k}\favre{\vbf_{k,h}}+{(-1)}^k\widehat{\kappa_i}(\favre{\vbf_{1,h}}-\favre{\vbf_{2,h}}),
    \label{MR-qdm}
    \end{multline}
    \noindent where $p_i$ is any convex combination of the averaged pressures $\bar{p_k}$. This equation is satisfied up to an error of order $\varepsilon^\tau+\varepsilon^{2-\tau}+(1-\delta_\xi)\varepsilon^{\xi-1}$.

    \item \textbf{Averaged equations of state}
    \begin{equation}
        \forall k \in \{1,2\}, \qquad \bar{p_k}=p_k(\bar{\rho_k}).
    \label{MR-barotropy}
    \end{equation}
    This equation is satisfied up to an error of order $\varepsilon^\tau$. 

    \item \textbf{Equality of the pressures}
    \begin{equation}
        \bar{p_1}=\bar{p_2}.
    \label{MR-barotropy-equality}
    \end{equation}
    This equation is satisfied up to an error of order $\varepsilon^\tau$. 

    \item \textbf{Saturation of the domain}
    \begin{equation}
        \alpha_1+\alpha_2=1.
    \label{MR-sum-alpha}
    \end{equation}
    \end{itemize}
\end{theorem}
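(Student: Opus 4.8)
The plan is to combine an order-by-order analysis of the rescaled system from the preceding lemma with an integration in the vertical variable $z$, closing the nonlinear terms by means of the vertical profiles dictated by the dominant balances. First I would extract the vertical profiles of the leading-order fields. Multiplying the vertical momentum equation \cref{1_rescaled_qdm_z} by $\varepsilon$ and inserting the scalings \cref{MR_profile_viscosity_friction} shows that $\partial_z p_k=O(\varepsilon^{\min(2,\tau)})=O(\varepsilon^\tau)$ (since $\tau<2$); integrating in $z$ gives $p_k(t,\xbf,z)=\bar{p_k}(t,\xbf)+O(\varepsilon^\tau)$, and, the barotropic law \cref{1_barotropy_laws} being smooth and increasing, $\rho_k=\bar{\rho_k}+O(\varepsilon^\tau)$ as well. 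In the horizontal momentum equation \cref{1_rescaled_qdm_h} the term $\tfrac{\mu_k}{\varepsilon^2}\partial_{zz}\vbf_{k,h}$ carries the singular prefactor $\varepsilon^{\tau-2}$, so that, all other terms being $O(1)$, one must have $\partial_{zz}\vbf_{k,h}=O(\varepsilon^{2-\tau})$; reading the tangential Navier condition \cref{1_rescaled_navier_tan_x}--\cref{1_rescaled_navier_tan_y}, whose dominant balance is $\tfrac{\mu_k}{\varepsilon}\partial_z\vbf_{k,h}(\alpha_1)\sim-\kappa_i(\vbf_{1,h}-\vbf_{2,h})$, then forces the slope $\partial_z\vbf_{k,h}$ itself to be $O(\varepsilon^{2-\tau})$, whence
\[ \vbf_{k,h}(t,\xbf,z)=\favre{\vbf_{k,h}}(t,\xbf)+O(\varepsilon^{2-\tau})\quad\text{uniformly in }z. \]
With these profiles in hand, averaging $p_k=p_k(\rho_k)$ yields \cref{MR-barotropy}, and the vertical stress-continuity relation \cref{1_rescaled_tensor_z} — whose viscous contributions are all $O(\varepsilon^\tau)$ once the flat velocity profile is used — gives $p_1(\alpha_1)=p_2(\alpha_1)+O(\varepsilon^\tau)$, i.e. \cref{MR-barotropy-equality}. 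The profile also lets me replace every boundary value of $\vbf_{k,h}$ and every Favre fluctuation by $\favre{\vbf_{k,h}}$ at the cost of $O(\varepsilon^{2-\tau})$.

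Next I integrate the mass equation \cref{1_rescaled_cm} over $z\in(h_{k-1},h_k)$. The Leibniz rule turns $\int\partial_t\rho_k$ and $\int\nabla_h\cdot(\rho_k\vbf_{k,h})$ into $\partial_t(\alpha_k\bar{\rho_k})$ and $\nabla_h\cdot(\alpha_k\bar{\rho_k}\favre{\vbf_{k,h}})$ (using $\overline{\rho_k\vbf_{k,h}}=\bar{\rho_k}\favre{\vbf_{k,h}}$), minus interface boundary contributions, while $\int\partial_z(\rho_k w_k)$ produces an interface flux and vanishes at the walls by the impermeability conditions in \cref{1_rescaled_bottom} and \cref{1_rescaled_top}. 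The three interface contributions assemble into $\rho_k(\alpha_1)\bigl(\partial_t\alpha_1+\vbf_{k,h}(\alpha_1)\cdot\nabla_h\alpha_1-w_k(\alpha_1)\bigr)$, which vanishes exactly by the fraction equation \cref{1_rescaled_fraction} together with the normal Navier condition \cref{1_rescaled_navier_orth}. This gives \cref{MR-cm} with no remainder.

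I then repeat the $z$-integration on \cref{1_rescaled_qdm_h}. As above, the convective interface terms coming from $\partial_t$, $\nabla_h\cdot$ and $\partial_z$ recombine into $\vbf_{k,h}(\alpha_1)$ times the fraction equation and cancel, and the Leibniz boundary term of the pressure gradient contributes $p_k(\alpha_1)\nabla_h\alpha_k$ on the right-hand side (the sign mismatch between the two phases being absorbed by $\nabla_h\alpha_2=-\nabla_h\alpha_1$), which equals $p_i\nabla_h\alpha_k+O(\varepsilon^\tau)$ for any convex combination $p_i$ of $\bar{p_1}$ and $\bar{p_2}$. The decisive term is the vertical diffusion $\tfrac{\mu_k}{\varepsilon^2}\bigl[\partial_z\vbf_{k,h}\bigr]_{h_{k-1}}^{h_k}$: at the wall the conditions \cref{1_rescaled_bottom} and \cref{1_rescaled_top} express $\tfrac{\mu_k}{\varepsilon^2}\partial_z\vbf_{k,h}$ through $\kappa_k\vbf_{k,h}/\varepsilon=\widehat{\kappa_k}\varepsilon^{\xi-1}\vbf_{k,h}$, which after integration contributes $-\delta_\xi\widehat{\kappa_k}\favre{\vbf_{k,h}}$ up to $(1-\delta_\xi)\varepsilon^{\xi-1}$, while at the interface the tangential condition (combined with the symmetry of the tangential interface stress for the upper phase) gives $\tfrac{\mu_k}{\varepsilon^2}\partial_z\vbf_{k,h}(\alpha_1)=-\widehat{\kappa_i}(\favre{\vbf_{1,h}}-\favre{\vbf_{2,h}})+O(\varepsilon^\tau+\varepsilon^{2-\tau})$, producing the drag $(-1)^k\widehat{\kappa_i}(\favre{\vbf_{1,h}}-\favre{\vbf_{2,h}})$. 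Closing the Reynolds-type stress by $\overline{\rho_k\vbf_{k,h}\otimes\vbf_{k,h}}=\bar{\rho_k}\favre{\vbf_{k,h}}\otimes\favre{\vbf_{k,h}}+O(\varepsilon^{2-\tau})$ and bounding the bulk viscous terms $(\lambda_k+\mu_k)\nabla_h(\nabla\cdot\vbf_k)+\mu_k\Delta_h\vbf_{k,h}$ by $O(\varepsilon^\tau)$ reproduces \cref{MR-qdm} with the announced error $\varepsilon^\tau+\varepsilon^{2-\tau}+(1-\delta_\xi)\varepsilon^{\xi-1}$; \cref{MR-sum-alpha} is the geometric saturation $\alpha_1+\alpha_2=1$.

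The hard part will not be the $z$-integration, which is a Leibniz/Reynolds-transport computation, but the simultaneous control of the vertical profiles: one must show that the singular factor $\varepsilon^{\tau-2}$ in \cref{1_rescaled_qdm_h} genuinely flattens $\vbf_{k,h}$ to $\favre{\vbf_{k,h}}+O(\varepsilon^{2-\tau})$, and that the boundary slopes $\partial_z\vbf_{k,h}$ are delivered by the Navier conditions as exactly the friction source terms with the correct power of $\varepsilon$. This amounts to matching the competing orders $\varepsilon^{\tau-1}$ (viscous), $\varepsilon$ (interfacial friction) and $\varepsilon^\xi$ (wall friction) in the interface and boundary relations, and then verifying that every discarded contribution — the subdominant pieces of \cref{1_rescaled_navier_tan_x}--\cref{1_rescaled_navier_tan_y}, the Favre fluctuations, and the in-plane viscous stresses — is truly of size $\varepsilon^\tau+\varepsilon^{2-\tau}+(1-\delta_\xi)\varepsilon^{\xi-1}$. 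It is precisely here that the restrictions $0<\tau<2$ and $\xi\geqslant 1$ in \cref{MR_profile_viscosity_friction} are needed, to keep these remainders small and to decide, through $\delta_\xi$, whether the wall friction survives at leading order.
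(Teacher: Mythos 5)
Your proposal is correct and follows essentially the same route as the paper: flatten the vertical profiles of $p_k$ and $\vbf_{k,h}$ via the dominant balances in \cref{1_rescaled_qdm_z}, \cref{1_rescaled_qdm_h} and the Navier conditions, then integrate in $z$ with the Leibniz rule, closing boundary and Reynolds-type terms by the Favre averages and tracking the errors $\varepsilon^\tau+\varepsilon^{2-\tau}+(1-\delta_\xi)\varepsilon^{\xi-1}$. The only (immaterial) differences are organizational: you inject the scalings \cref{MR_profile_viscosity_friction} from the outset and obtain the slope bound $\partial_z\vbf_{k,h}=O(\varepsilon^{2-\tau})$ from the interface condition, whereas the paper keeps generic coefficients $\mu,\kappa,\kappa_i$ until the final closing step and anchors the same bound at the wall condition.
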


\begin{remark}
    The approximation errors mentioned above show the necessity of prescribing these specific scalings of viscosity and friction coefficients. Our work provides the approximation errors and the necessary conditions for the model to be valid. The exponents $\xi$ and $\tau$ are independent. 
\end{remark}

\begin{remark}
    In the tangential component of the Navier interface condition~\cref{1_Navier_interface}, we can replace the term $\kappa_i (\vbf_1-\vbf_2)$ by a quadratic one $\kappa_i |\vbf_1-\vbf_2|(\vbf_1-\vbf_2)$. The resulting term in the averaged model becomes $\kappa_i |\favre{\vbf_1}-\favre{\vbf_2}|(\favre{\vbf_1}-\favre{\vbf_2})$ and causes no change to the error approximation.
\end{remark}

\begin{remark}
    The scalings detailed in~\cref{MR_profile_viscosity_friction} result in an asymptotic model that is compressible and inviscid. In other words, our asymptotic limit corresponds to moderate Mach numbers and high Reynolds numbers.
\end{remark}

\subsection{Proof of~\texorpdfstring{\cref{thm_MR}}{thm MR}: derivation of the asymptotic model}

The proof of~\cref{thm_MR} is divided into several steps. Starting from the rescaled equations~\cref{1_rescaled_advection_domain}-\cref{1_rescaled_navier_tan_y}, we first simplify the vertical momentum equation and the interface conditions by neglecting terms of high order in $\varepsilon$. Then, we integrate the mass conservation equation and the horizontal momentum equation in the vertical direction. This procedure causes boundary terms to appear in the equations, which must be closed using the rescaled boundary and interface conditions. Finally, we prescribe specific profiles for the viscosity and friction coefficients for the asymptotic model to be valid.

\subsubsection{Simplification of the vertical momentum equation and interface conditions}\label{subsec:asym-analysis}

    Some of the equations and conditions listed in~\cref{Lemma 1} are very complex once rescaled. However, some terms are of high order in $\varepsilon$, and therefore become very small in our setting ($\varepsilon \ll 1$). Dropping these terms allows a first simplification of the system. To make sure that the derived system is really an approximation of the initial double Navier-Stokes system, we keep trace of the errors that we cause along the way using Landau notations. Since we have assumed that the scaling of the viscosity and friction coefficients is independent of the fluid, we drop the subscript $k$ in the Landau notations.

    Performing the asymptotic analysis impacts the vertical component of the momentum equation~\cref{1_rescaled_qdm_z} as well as interface conditions~\cref{1_rescaled_tensor_z},~\cref{1_rescaled_navier_tan_x} and~\cref{1_rescaled_navier_tan_y} in the following way.

    \begin{itemize}
        \item \textbf{Asymptotic vertical momentum equation}
        \begin{equation}
            \forall k \in \{1,2\},\qquad \partial_z p_k  = (\lambda_k+\mu_k)\partial_z(\nabla \cdot \vbf_{k} )+\mu_k \partial_{zz} w_k +O(\varepsilon^2).
        \label{3_asymptotic_qdm_z}
        \end{equation}

        \item \textbf{Asymptotic continuity of the vertical component of the normal stress at the interface}
        \begin{multline}
            \Big(-p_1 +\lambda_1 \nabla \cdot \vbf_1  + 2 \mu_1\partial_z w_1  
            - \mu_1 \nabla_h \alpha_1 \cdot \partial_z \vbf_{1,h}\Big)\Big|_{z=\alpha_1}\\
            = {(\cdots)}_2\big|_{z=\alpha_1}+O(\mu \varepsilon^2).
        \label{3_asymptotic_tensor_z}
        \end{multline}

        \item \textbf{Asymptotic tangential Navier interface condition}
        \begin{equation}
            \Big(\frac{\mu_1}{\varepsilon}\partial_z \vbf_{1,h}  + \kappa_i(\vbf_{1,h} -\vbf_{2,h} )\Big)\Big|_{z=\alpha_1}=O(\mu \varepsilon+\kappa_i \varepsilon^2)
        \label{3_asymptotic_Navier_tan}
        \end{equation}
    \end{itemize}

    \begin{remark}\label{rmk_1_odg_velocity}
        In the previous paragraph, we did not simplify the horizontal momentum equation~\cref{1_rescaled_qdm_h} since the terms of order $\varepsilon^2$ are not negligible. Indeed, multiplying~\cref{1_rescaled_qdm_h} by $\varepsilon^2$ and neglecting the quadratic terms in $\varepsilon$ indeed yields 
        \begin{equation}
            \mu_k \partial_{zz} \vbf_{k,h}  = O(\varepsilon^2).
        \label{2_odg_dzz_velocity}
        \end{equation}
        The term $\mu_k \partial_{zz} \vbf_{k,h}  /\varepsilon^2$ is thus of order $1$ and cannot be neglected in the equation.
    \end{remark}

\subsubsection{Averaging the equations}

    Now, we perform the averaging of the mass conservation equation and momentum equations for each fluid in the vertical direction, therefore obtaining a two-dimensional system. We approximate the boundary terms which appear during the integration by terms which only depend on the averaged unknowns in order to close the system. We keep trace of the approximation errors during the whole averaging process. We perform the averaging for the first fluid, the second fluid being treated in the same way.

    First, average the mass conservation equations~\cref{1_rescaled_cm}. We multiply the mass conservation equation~\cref{1_rescaled_cm} by $\chi_1$ and we integrate it with respect to $z \in (0,1)$ --- which is the same as integrating the equation with respect to $z \in (0,\alpha_1)$:
    \[ \int_0^1 \chi_1 \left(  \partial_t \rho_1 + \nabla_h \cdot (\rho_1\vbf_{1,h}) + \partial_z (\rho_1 w_1)\right) (z)\d z =0.\]
    Since $\chi_1 \partial_t \rho_1 = \partial_t(\chi_1 \rho_1)-\rho_1 \partial_t \chi_1$ with $\partial_t \chi_1 = \partial_t\alpha_1 \delta_{z=\alpha_1}$, 
    \[ \int_0^1 \chi_1 \partial_t \rho_1(z) \d z = \partial_t \int_0^{\alpha_1} \rho_1(z) \d z - \rho_1(\alpha_1) \partial_t \alpha_1= \partial_t (\alpha_1 \bar{\rho_1}) - \rho_1(\alpha_1) \partial_t \alpha_1.\]
    Proceeding similarly for the second term of the equation, we get that
    \begin{multline*}
        \partial_t (\alpha_1 \bar{\rho_1})-\rho_1 (\alpha_1)\partial_t \alpha_1 + \nabla_h \cdot (\alpha_1 \bar{\rho_1\vbf_{1,h}})- \rho_1\vbf_{1,h}(\alpha_1)\cdot \nabla_h (\alpha_1)\\
        + \rho_1w_1(\alpha_1) - \rho_1 w_1(0)=0.
    \end{multline*}
    Using the fraction equation~\cref{1_rescaled_fraction}, the bottom Navier condition~\cref{1_rescaled_bottom} and the relation $\bar{\rho_1\vbf_{1,h}}=\bar{\rho_1}\favre{\vbf_{1,h}}$, we finally get the averaged mass conservation equation~\cref{MR-cm} for the first fluid.

    \noindent The reasoning is the same for the second fluid, the equation being multiplied this time by $\chi_2$ and using the Navier condition~\cref{1_rescaled_top}. Note that the averaged mass conservation equations are exact: we did not make any approximations in order to get them.

    The averaging of the momentum equations is more complex and is detailed in the following proposition.

    \begin{proposition}
        The closed horizontal momentum equations read
        \begin{multline}
            \forall k\in \{1,2\}, \qquad \partial_t (\alpha_k \bar{\rho_k }\favre{\vbf_{k,h} }) + \nabla_h \cdot (\alpha_k \bar{\rho_k } \favre{\vbf_{k,h} } \otimes \favre{\vbf_{k,h} })
            + \nabla_h(\alpha_k \bar{p_k })\\
            =p_i \nabla_h \alpha_k-\frac{\kappa_k}{\varepsilon} \favre{\vbf_{k,h} }+{(-1)}^k\frac{\kappa_i}{\varepsilon}(\favre{\vbf_{1,h} }-\favre{\vbf_{2,h} })+ \rcal,
        \label{4_qdm_avg}
        \end{multline}
        where \[\rcal=O\Big(\mu+\kappa_i \varepsilon + \frac{\kappa_i(\kappa + \varepsilon)+\kappa(\kappa+\varepsilon)}{\mu} + \frac{{(\varepsilon \kappa + \varepsilon^2)}^2}{\mu^2} \Big).\]
    \end{proposition}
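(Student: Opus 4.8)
The plan is to reproduce, for the horizontal momentum equation \cref{1_rescaled_qdm_h}, the averaging already performed on \cref{1_rescaled_cm}: I multiply \cref{1_rescaled_qdm_h} by $\chi_1$ and integrate over $z\in(0,1)$, treating the first fluid (the second is symmetric, using \cref{1_rescaled_top} instead of \cref{1_rescaled_bottom} and reading the interface condition with $\S_2$ via \cref{remark 1: tangential stress}). As in the mass computation, each derivative is pulled outside its integral with Leibniz's rule, leaving a trace at the moving interface $z=\alpha_1$ and, for the vertical flux, at the wall $z=0$. The time term gives $\partial_t(\alpha_1\bar{\rho_1\vbf_{1,h}})$, the horizontal convection gives $\nabla_h\cdot(\alpha_1\overline{\rho_1\vbf_{1,h}\otimes\vbf_{1,h}})$, the vertical convection gives a pair of traces, and the pressure gradient gives $\nabla_h(\alpha_1\bar{p_1})$. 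First I collect the three convective interface traces into $\rho_1\vbf_{1,h}(\alpha_1)\bigl(-\partial_t\alpha_1-\vbf_{1,h}(\alpha_1)\cdot\nabla_h\alpha_1+w_1(\alpha_1)\bigr)$, which vanishes identically by the kinematic fraction equation \cref{1_rescaled_fraction}; the wall trace of the vertical flux vanishes by impermeability $w_1|_{z=0}=0$ in \cref{1_rescaled_bottom}. With the identity $\bar{\rho_1\vbf_{1,h}}=\bar{\rho_1}\favre{\vbf_{1,h}}$ this yields the exact transport part of \cref{4_qdm_avg}.

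Two closures then remain. For the convective flux I write $\overline{\rho_1\vbf_{1,h}\otimes\vbf_{1,h}}=\bar{\rho_1}\favre{\vbf_{1,h}}\otimes\favre{\vbf_{1,h}}+\overline{\rho_1(\vbf_{1,h}-\favre{\vbf_{1,h}})\otimes(\vbf_{1,h}-\favre{\vbf_{1,h}})}$, so the defect is quadratic in the vertical fluctuation of the velocity about its Favre average. For the viscous right-hand side, the horizontal pieces $(\lambda_1+\mu_1)\nabla_h(\nabla\cdot\vbf_1)+\mu_1\Delta_h\vbf_{1,h}$ integrate to $O(\mu)$, while the dominant vertical piece is $\tfrac{\mu_1}{\varepsilon^2}\int_0^{\alpha_1}\partial_{zz}\vbf_{1,h}\,\d z=\tfrac{\mu_1}{\varepsilon^2}\bigl(\partial_z\vbf_{1,h}(\alpha_1)-\partial_z\vbf_{1,h}(0)\bigr)$. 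I substitute the wall flux $\tfrac{\mu_1}{\varepsilon}\partial_z\vbf_{1,h}(0)=\kappa_1\vbf_{1,h}(0)$ from \cref{1_rescaled_bottom} and the interface flux $\tfrac{\mu_1}{\varepsilon}\partial_z\vbf_{1,h}(\alpha_1)=-\kappa_i(\vbf_{1,h}-\vbf_{2,h})(\alpha_1)+O(\mu\varepsilon+\kappa_i\varepsilon^2)$ from the asymptotic tangential condition \cref{3_asymptotic_Navier_tan}, which produces exactly $-\tfrac{\kappa_1}{\varepsilon}\vbf_{1,h}(0)-\tfrac{\kappa_i}{\varepsilon}(\vbf_{1,h}-\vbf_{2,h})(\alpha_1)$, up to the $O(\mu+\kappa_i\varepsilon)$ already displayed in $\rcal$.

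I expect the main obstacle to be the last step, replacing the pointwise trace velocities $\vbf_{1,h}(0)$ and $\vbf_{1,h}(\alpha_1)$ by the single averaged unknown $\favre{\vbf_{1,h}}$, since this is where the genuinely new error terms of $\rcal$ are created. The estimate \cref{2_odg_dzz_velocity} of \cref{rmk_1_odg_velocity} gives $\partial_{zz}\vbf_{1,h}=O(\varepsilon^2/\mu)$; combined with the two flux relations above, which bound $\partial_z\vbf_{1,h}$ at each endpoint by $O((\varepsilon\kappa+\varepsilon^2)/\mu)$, a single integration in $z$ controls $\partial_z\vbf_{1,h}$ uniformly on the layer by the same order, so the vertical oscillation of $\vbf_{1,h}$ is $O((\varepsilon\kappa+\varepsilon^2)/\mu)$. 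Hence $\vbf_{1,h}(\alpha_1)=\favre{\vbf_{1,h}}+O((\varepsilon\kappa+\varepsilon^2)/\mu)$ and likewise at $z=0$; multiplying by the prefactors $\kappa_i/\varepsilon$ and $\kappa_1/\varepsilon$ turns these into the $\bigl(\kappa_i(\kappa+\varepsilon)+\kappa(\kappa+\varepsilon)\bigr)/\mu$ contribution to $\rcal$, while the quadratic convective defect becomes $O\bigl((\varepsilon\kappa+\varepsilon^2)^2/\mu^2\bigr)$.

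Finally, I close the pressure trace: \cref{3_asymptotic_qdm_z} forces $\partial_z p_1=O(\mu+\varepsilon^2)$, so $p_1(\alpha_1)=\bar{p_1}+O(\mu+\varepsilon^2)$, while the asymptotic normal-stress continuity \cref{3_asymptotic_tensor_z} gives $p_1(\alpha_1)=p_2(\alpha_1)+O(\mu)$; thus $\bar{p_1}$ and $\bar{p_2}$ coincide to the same order and the boundary term $-p_1(\alpha_1)\nabla_h\alpha_1$ may be replaced by $-p_i\nabla_h\alpha_1$ for any convex combination $p_i$ of $\bar{p_1},\bar{p_2}$, the error $O(\mu+\varepsilon^2)$ being absorbed into $\rcal$ under the coefficient scalings. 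Moving this term to the right-hand side supplies $p_i\nabla_h\alpha_1$, and gathering the transport part, the two friction terms and all errors yields \cref{4_qdm_avg}.
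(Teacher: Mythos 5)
Your proposal is correct and follows essentially the same route as the paper: multiply by $\chi_1$ and integrate, cancel the traces via the fraction equation and impermeability, close the viscous fluxes with the wall Navier condition and the asymptotic interface condition~\cref{3_asymptotic_Navier_tan}, replace trace velocities by Favre averages using the $O((\varepsilon\kappa+\varepsilon^2)/\mu)$ vertical-oscillation estimate (with the convective defect quadratic in the fluctuation, exactly the paper's two lemmas), and close the pressure trace via~\cref{3_asymptotic_qdm_z} and~\cref{3_asymptotic_tensor_z}. The only cosmetic deviation is the extra $O(\varepsilon^2)$ you carry in the pressure estimate, which, as you note, is absorbed into $\rcal$ (indeed $\varepsilon^2\lesssim \mu+\varepsilon^4/\mu^2$ for $\mu\lesssim 1$), whereas the paper writes $\partial_z p_k=O(\mu)$ directly.
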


    \begin{proof}
        Once again, we develop the proof for $k=1$. Multiply the horizontal momentum equation~\cref{1_rescaled_qdm_h} by $\chi_1$ and integrate it with respect to $z \in (0,1)$. Using the fraction equation~\cref{1_rescaled_fraction}, we obtain
        \begin{multline*}
            \partial_t (\alpha_1 \bar{\rho_1}\favre{\vbf_{1,h}}) + \nabla_h \cdot (\alpha_1 \bar{\rho_1}\favre{\vbf_{1,h}\otimes \vbf_{1,h}})+\nabla_h (\alpha_1 \bar{p_1})\\
            =p_1(\alpha_1)\nabla_h(\alpha_1) + \frac{\mu_1}{\varepsilon^2} \partial_z \vbf_{1,h}(\alpha_1) -  \frac{\mu_1}{\varepsilon^2} \partial_z \vbf_{1,h}(0)\\
            + \int_0^{\alpha_1} \big( (\lambda_1+\mu_1) \nabla_h(\nabla \cdot \vbf_{1}) + \mu_1 \Delta_h \vbf_{1,h}\big)(z) \d z.
        \end{multline*}
        Thanks to the Navier condition~\cref{1_rescaled_bottom}, the equation reads 
        \begin{multline}
            \partial_t (\alpha_1 \bar{\rho_1 }\favre{\vbf_{1,h} })+\nabla_h\cdot (\alpha_1 \bar{\rho_1 }\favre{\vbf_{1,h} \otimes \vbf_{1,h} })+\nabla_h (\alpha_1 \bar{p_1 })=p_1 (\alpha_1) \nabla_h \alpha_1\\
            - \frac{\kappa_1}{\varepsilon}\vbf_{1,h} (0)+ \frac{\mu_1}{\varepsilon^2}\partial_z \vbf_{1,h} (\alpha_1) +\int_0^{\alpha_1}\big((\lambda_1+\mu_1)\nabla_h(\nabla \cdot \vbf_1 )+\mu_1 \Delta_h\vbf_{1,h} \big)(z) \d z.
        \label{3_qdm_h_brut}
        \end{multline}
        The asymptotic Navier condition~\cref{3_asymptotic_Navier_tan} then gives
        \[ \frac{\mu_1}{\varepsilon^2} \partial_z \vbf_{1,h}(\alpha_1)=\frac{\kappa_i}{\varepsilon}(\vbf_{2,h}(\alpha_1)-\vbf_{1,h}(\alpha_1))+O(\mu+\kappa_i \varepsilon).\]
        Injecting this new term into~\cref{3_qdm_h_brut} causes an approximation of order $\mu$. The integral term of the equation being of the same order, we obtain
        \begin{multline}
            \partial_t (\alpha_1 \bar{\rho_1 }\favre{\vbf_{1,h} })+\nabla_h\cdot (\alpha_1 \bar{\rho_1 }\favre{\vbf_{1,h} \otimes \vbf_{1,h} })+\nabla_h (\alpha_1 \bar{p_1 })=p_1 (\alpha_1) \nabla_h \alpha_1\\
            -\frac{\kappa_1}{\varepsilon}\vbf_{1,h}(0)+\frac{\kappa_i}{\varepsilon}(\vbf_{2,h}(\alpha_1)-\vbf_{1,h}(\alpha_1))+O(\mu+\kappa_i \varepsilon).
        \end{multline}

        \noindent To close this equation, the pointwise values of the velocity and the pressure must be expressed thanks to their averaged values. 

        \begin{lemma}\label{lemma 3}
            The pointwise values of the velocity $\vbf_{1,h}$ can be approximated as follows:
            \begin{equation}
                \forall z \in (0,\alpha_1), \qquad \vbf_{1,h}(z)=\favre{\vbf_{1,h}}+O\left(\frac{\varepsilon \kappa + \varepsilon^2}{\mu}\right).
            \label{4_approx_velocity_avg}
            \end{equation}
            The pointwise values of the pressure can be approximated as follows:
            \begin{equation}
                p_1(\alpha_1)=p_i+O(\mu),
            \label{4_approx_pressure_avg}
            \end{equation}
            where $p_i$ is defined in~\cref{thm_MR}.
        
            \noindent The same approximations remain valid for $\vbf_{2,h}$ and $p_2$. 
        \end{lemma}

        \begin{proof}
            Recall that in~\cref{rmk_1_odg_velocity}, we noticed that 
            \[ \forall z \in (0,\alpha_1), \qquad \mu_1 \partial_{zz} \vbf_{1,h}(z)=O(\varepsilon^2).\]
            Moreover, the bottom boundary Navier condition~\cref{1_rescaled_bottom} gives us that 
            \[\mu_1\partial_z \vbf_{1,h}(0)=O(\kappa \varepsilon).\]
            Thanks to a Taylor-Lagrange expansion, 
            \begin{equation}
                \forall z \in (0,\alpha_1), \qquad \mu_1 \partial_z \vbf_{1,h}(z)=O(\kappa \varepsilon+\varepsilon^2).
            \label{4_odg_dz_velocity}
            \end{equation}
            Another Taylor-Lagrange expansion gives
            \begin{align*}
                \forall (z,z')\in {(0,\alpha_1)}^2, \qquad \vbf_{1,h}(z)&=\vbf_{1,h}(z') + (z-z') \partial_z \vbf_{1,h}(z')+O(\varepsilon^2/\mu),\\
                &=\vbf_{1,h}(z') + O\left(\frac{\varepsilon \kappa + \varepsilon^2}{\mu}\right).
            \end{align*}
            Taking the Favre average of the previous equation with respect to $z'$ yields~\cref{4_approx_velocity_avg}.
            The reasoning is the same for $\vbf_{2,h}$ with $z \in (\alpha_1,1)$. We use the top Navier condition~\cref{1_rescaled_top} to get the estimate. 

            In the case of the pressure, the vertical component of the momentum equations~\cref{3_asymptotic_qdm_z} provide the estimates 
            \[ \left\{ \begin{array}{lc}
            \forall z \in (0,\alpha_1), & \partial_z p_1(z)=O(\mu),\\
            \forall z \in (\alpha_1,1), & \partial_z p_2(z)=O(\mu).
            \end{array}\right.\]
            Therefore, the pointwise values of the pressures can be approximated at order $\mu$ by their average. 
            However, interface condition~\cref{3_asymptotic_tensor_z} yields
            \[ p_1(\alpha_1)=p_2(\alpha_1)+O(\mu).\]
            Therefore, $\bar{p_1}=\bar{p_2}+O(\mu)$ and
            \[ p_1(\alpha_1)=p_i+O(\mu),\]
            where $p_i$ is defined in~\cref{thm_MR}. Hence~\cref{4_approx_velocity_avg} and~\cref{4_approx_pressure_avg}.
        \end{proof}

        \begin{remark}
            The computations above allow more freedom in the definition of $p_i$ than being a convex combination of the averaged pressures. Indeed, the proposition is still valid for any $p_i$ such that $\bar{p_1}-p_i=O(\mu)$.
        \end{remark}

        Approximating these different pointwise values by their (Favre) average yields
        \begin{multline*}
            \partial_t (\alpha_1 \bar{\rho_1 }\favre{\vbf_{1,h} })+\nabla_h\cdot (\alpha_1 \bar{\rho_1 }\favre{\vbf_{1,h} \otimes \vbf_{1,h} })+\nabla_h (\alpha_1 \bar{p_1 })\\
            =p_i \nabla_h \alpha_1-\frac{\kappa_1}{\varepsilon}\favre{\vbf_{1,h}} +\frac{\kappa_i}{\varepsilon}(\favre{\vbf_{2,h}}-\favre{\vbf_{1,h}})\\
            +O\left(\mu+\kappa_i \varepsilon+\frac{\kappa_i(\kappa+\varepsilon)+\kappa(\kappa+\varepsilon)}{\mu}\right).
        \end{multline*}

        \begin{remark}
            Replacing the pointwise values of the velocities by their Favre average in the Navier interface condition~\cref{3_asymptotic_Navier_tan} and using~\cref{4_odg_dz_velocity}, we have 
            \begin{equation}
                \favre{\vbf_{2,h}}-\favre{\vbf_{1,h}}=O\left( \frac{\kappa+\varepsilon+\mu \varepsilon}{\kappa_i}+\frac{\kappa \varepsilon + \varepsilon^2}{\mu}+\varepsilon^2\right).
            \label{4_diff_velocity}
            \end{equation}
        \end{remark}

        \noindent Finally, we have to express the Favre average $\favre{\vbf_{1,h}\otimes \vbf_{1,h}}$ with respect to the tensor product of the Favre averages. This way, the resulting system will only depend on the averages of the density, the velocity and the pressure. 

        \begin{lemma}
            The Favre average of the tensor product of the velocity $\vbf_{1,h}$ with itself is approximated by the tensor product of its Favre averages.
            \begin{equation}
                \favre{\vbf_{1,h}  \otimes \vbf_{1,h} }=\favre{\vbf_{1,h} } \otimes \favre{\vbf_{1,h} }+O\Big( \frac{{(\varepsilon \kappa + \varepsilon^2)}^2}{\mu^2}\Big).
            \label{4_approx_tensor_product}
            \end{equation}
        \end{lemma}

        \begin{proof}
            As shown in the previous lemma, 
            \[ \forall z \in (0,\alpha_1), \qquad \vbf_{1,h}(z)=\favre{\vbf_{1,h}}+f(z),\]
            \begin{flalign*}
                \text{where} && f=O\Big( \frac{\varepsilon \kappa + \varepsilon^2}{\mu}\Big) \quad \text{and} \quad \favre{f}=0.&&
            \end{flalign*}
            We can thus develop the tensor product $\vbf_{1,h} \otimes \vbf_{1,h} $ as
            \[ \vbf_{1,h}  \otimes \vbf_{1,h} =\favre{\vbf_{1,h} } \otimes \favre{\vbf_{1,h} } + 2\favre{\vbf_{1,h} } \otimes f + f \otimes f.\]
            Since $f \otimes f = O({(\kappa\varepsilon+\varepsilon^2)}^2/\mu^2)$, taking the Favre average of the equation above yields~\cref{4_approx_tensor_product}.
        \end{proof}

        Taking all of the above into account, we obtain the averaged momentum equation~\cref{4_qdm_avg}. For the second fluid, the reasoning is the same once we have rewritten the tangential Navier condition using~\cref{remark 1: tangential stress}.
    \end{proof}

\subsubsection{Closing the system}

    To get the set of equations that is presented in the main result, one must make sure that the momentum equations~\cref{4_qdm_avg} remain valid when $\varepsilon$ becomes very small. Three conditions arise:
    \begin{enumerate}
    \item $\rcal \limit{\varepsilon}{0} 0$, which means that the approximation errors vanish when $\varepsilon$ tends to $0$;
    \item $\favre{\vbf_{1,h}}-\favre{\vbf_{2,h}}$ must be well defined (i.e finite) when $\varepsilon$ tends to $0$;
    \item the friction terms in the momentum equations must have a limit when $\varepsilon$ tends to $0$ that is independent of $\varepsilon$.
    \end{enumerate}
    These conditions are the reason for the particular scaling requirements for the viscosity and friction coefficients presented in~\cref{MR_profile_viscosity_friction}. Indeed, let us prescribe viscosity and friction scalings as follows:
    \begin{equation}
        \forall k\in \{1,2\}, \qquad \mu_k=\widehat{\mu_k}\varepsilon^\tau, \quad \kappa_k=\widehat{\kappa_k}\varepsilon^\xi,\quad \kappa_{i}=\widehat{\kappa_{i}} \varepsilon^\zeta, 
    \end{equation}
    where $\tau,\xi,\zeta \in \R_+$. Rewriting the approximation error $\rcal$ with the previous scalings yields 
    \[ \rcal = O(\varepsilon^\tau+\varepsilon^{1+\zeta}+\varepsilon^{\xi+\zeta-\tau}+\varepsilon^{1+\zeta-\tau}+\varepsilon^{2\xi-\tau}+\varepsilon^{1+\xi-\tau}+\varepsilon^{2+2\xi-2\tau} + \varepsilon^{3+\xi-2\tau} + \varepsilon^{4-2\tau}).\]
    Satisfying the first condition adds the following constraints on the viscosity and friction scalings:
    \[ \zeta+\xi > \tau, \quad 1+\zeta >\tau, \quad 1+\xi > \tau, \quad 2\xi >\tau \text{ and } 2>\tau.\]

    Next, in order for the averaged model to be valid, the difference between the averaged velocities must remain finite when $\varepsilon$ becomes very small. Looking back at the relation~\cref{4_diff_velocity} and replacing the viscosities and friction coefficients by their scalings yields
    \[ \favre{\vbf_{1,h}}-\favre{\vbf_{2,h}}=O(\varepsilon^{\xi-\zeta}+\varepsilon^{1-\zeta}+\varepsilon^{1+\tau-\zeta}+\varepsilon^{1+\xi-\tau}+\varepsilon^{2-\tau}+\varepsilon^2).\]
    The second condition is thus satisfied if 
    \[ \xi \geqslant \zeta, \quad 1 \geqslant \zeta \text{ and } 1+\tau \geqslant \zeta.\]

    Finally, for the model to be valid, it must be independent of $\varepsilon$. Since $\favre{\vbf_{1,h}}-\favre{\vbf_{2,h}}$ is finite, we must ensure that the friction terms $\frac{\kappa_i}{\varepsilon}(\favre{\vbf_{2,h} }-\favre{\vbf_{1,h} })$ and $\frac{\kappa_k}{\varepsilon} \favre{\vbf_{k,h} }$ for $k\in \{1,2\}$ are well defined at the limit $\varepsilon \to 0$. This condition is satisfied if
    \[ \xi \geqslant 1 \text{ and } \zeta \geqslant 1.\]
    Taking into account all the different constraints results in the scalings~\cref{MR_profile_viscosity_friction}:
    \[\forall k\in \{1,2\}, \qquad \mu_k=\widehat{\mu_k}\varepsilon^\tau, \; 0<\tau < 2, \quad \kappa_k=\widehat{\kappa_k}\varepsilon^\xi, \; \xi\geqslant 1 \text{ and } \kappa_{i}=\widehat{\kappa_{i}} \varepsilon.\]

    \noindent Dropping the errors yields equations~\cref{MR-cm}-\cref{MR-qdm}. Equation~\cref{MR-sum-alpha} is given by the definition of the volume fractions. Equation~\cref{MR-barotropy-equality} has been proven in the demonstration of~\cref{lemma 3}.

    At this stage, this system is underdetermined. We indeed have eight unknowns and six equations. It remains to adapt the barotropy laws to the new set of equations to get equations~\cref{MR-barotropy}. Let us prove this result in the case $k=1$. 

    \noindent Using~\cref{3_asymptotic_qdm_z}, we know that $\partial_z (p_1(\rho_1))=O(\mu)$. Let $z^* \in (0,\alpha_1)$ such that $\rho_1(z^*)=\bar{\rho_1}$. Thanks to a Taylor-Lagrange expansion, 
    \[ \forall z \in (0,\alpha_1), \qquad p_1(\rho_1(z))=p_1(\rho_1(z^*))+O(\mu).\]
    Averaging the previous equation with respect to $z$ and dropping the error yields~\cref{MR-barotropy}.

    \begin{remark}
        Compared to the Navier-Stokes system of the first section, the limit of the fraction equation~\cref{1_fraction_advected_form} is not considered. Indeed, the volume fractions $\alpha_1$ and $\alpha_2$ are deduced from the equality of the averaged pressures~\cref{MR-barotropy-equality} and the saturation condition~\cref{MR-sum-alpha}.
    \end{remark}

\subsection{Recovering the one-velocity one-pressure model}

The choice of prescribing a Navier friction condition at the interface between the fluids is motivated by the desire to derive a two-velocity model. As mentioned in~\cref{rmk:about_interface_conditions}, other choices for the interface conditions are possible. If we allow the friction coefficient $\kappa_i$ to decrease slower than $\varepsilon$ (i.e taking $\kappa_i = \widehat{\kappa_i}\varepsilon^\zeta$ with $\zeta <1$), then the difference between the averaged velocities~\cref{4_diff_velocity} tends to zero when $\varepsilon$ becomes small. We then recover the model which would have resulted from the derivation with Dirichlet interface conditions with respect to the relative velocity:
\begin{equation}
    (\vbf_1-\vbf_2)|_{z=D\alpha_1}=0.
\label{eq:reminder_Dirichlet}
\end{equation}
This condition imposes the continuity of the velocity at the interface and leads to a one-velocity, one-pressure model, which reads: 

\begin{theorem}
    Assume the same scalings for the viscosity and friction coefficients as in~\cref{thm_MR} (conditions~\cref{MR_profile_viscosity_friction}). Let $(\alpha_1,\rho_1,\vbf_1,p_1,\alpha_2,\rho_2,\vbf_{2},p_2)$ be a solution to the Navier-Stokes system with Navier boundary and Dirichlet interface conditions~\cref{1_NS_cm}-\cref{1_tensor_interface},~\cref{1_fraction_advected_form},~\cref{eq:reminder_Dirichlet}.

    \noindent Then the asymptotic system satisfies $\favre{\vbf_{1,h}}=\favre{\vbf_{2,h}}$ up to an error of order $\varepsilon^{2-\tau}$. Denoting $\favre{\vbf_h}$ this velocity, the functions $(\alpha_1,\alpha_2,\bar{\rho_1},\bar{\rho_2},\favre{\vbf_h},\bar{p_1},\bar{p_2})$ satisfy the following closed system: 
    \begin{flalign}
        \partial_t(\alpha_k \bar{\rho_k}) + \nabla_h \cdot (\alpha_k \bar{\rho_k} \favre{\vbf_{h}})&=0, \label{Dirichlet-cm}\\
        \partial_t (\bar{\rho}\favre{\vbf_h}) + \nabla_h \cdot (\bar{\rho} \favre{\vbf_{h}} \otimes \favre{\vbf_{h}})+ \nabla_h(\alpha_1 \bar{p_1}+\alpha_2\bar{p_2})
        &=-\delta_\xi \widehat{(\kappa_1+\kappa_2)}\favre{\vbf_{h}},\label{Dirichlet-qdm}\\
        \bar{p_k}&=p_k(\bar{\rho_k}), \label{Dirichlet-barotropy}\\
        \bar{p_1}&=\bar{p_2}, \label{Dirichlet-barotropy-equality}\\
        \alpha_1+\alpha_2&=1 \label{Dirichlet-sum-alpha}
    \end{flalign}
    where $k \in \{1,2\}$ and $\bar{\rho}=\alpha_1\bar{\rho_1}+\alpha_2\bar{\rho_2}$. Same as before, the momentum equation is satified up to an error of order $\varepsilon^\tau+\varepsilon^{2-\tau}+(1-\delta_\xi)\varepsilon^{\xi -1}$ and the equality of the pressures is satisfied up to an error of order $\varepsilon^\tau$.
    This system is symmetrizable and therefore hyperbolic.
\end{theorem}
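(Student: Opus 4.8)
The plan is to reuse almost entirely the apparatus of \cref{thm_MR}, the only new ingredients being how the interface is handled in a summation step and the final algebraic check of hyperbolicity. I would start with the velocity equalization. The rescaled Dirichlet condition~\cref{eq:reminder_Dirichlet} reads $\vbf_{1,h}(\alpha_1)=\vbf_{2,h}(\alpha_1)$ at $z=\alpha_1$. Under the scalings~\cref{MR_profile_viscosity_friction}, the pointwise--average comparison~\cref{4_approx_velocity_avg} of \cref{lemma 3} reads $\vbf_{k,h}(z)=\favre{\vbf_{k,h}}+O(\varepsilon^{1+\xi-\tau}+\varepsilon^{2-\tau})=\favre{\vbf_{k,h}}+O(\varepsilon^{2-\tau})$ since $\xi\geqslant 1$. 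Evaluating both phases at the interface and subtracting gives $\favre{\vbf_{1,h}}-\favre{\vbf_{2,h}}=O(\varepsilon^{2-\tau})$, and I call $\favre{\vbf_h}$ the common value. The averaged mass equations, the barotropy laws~\cref{Dirichlet-barotropy} and the pressure equality~\cref{Dirichlet-barotropy-equality} are obtained exactly as in the proof of \cref{thm_MR}: those steps used only the normal-stress continuity~\cref{1_tensor_interface}, which is retained here, and never the tangential Navier condition. Substituting $\favre{\vbf_h}$ for $\favre{\vbf_{k,h}}$ in the exact averaged mass equations yields~\cref{Dirichlet-cm} within the same $O(\varepsilon^{2-\tau})$ tolerance, while~\cref{Dirichlet-sum-alpha} is the definition of the fractions.

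The momentum equation is where the Navier/Dirichlet distinction really acts. Here there is no coefficient $\kappa_i$ available to close the single-phase interfacial viscous flux $\frac{\mu_k}{\varepsilon^2}\partial_z\vbf_{k,h}(\alpha_1)$, so instead of closing each phase I would \emph{sum} the two phase momentum equations taken just before closure (\cref{3_qdm_h_brut} for phase $1$ and its phase-$2$ analogue) and exploit two cancellations. First, the interfacial pressure contributions add to $p_i\nabla_h(\alpha_1+\alpha_2)=0$ by~\cref{Dirichlet-sum-alpha}. Second, the interfacial viscous fluxes combine into $\frac{\mu_1}{\varepsilon^2}\partial_z\vbf_{1,h}(\alpha_1)-\frac{\mu_2}{\varepsilon^2}\partial_z\vbf_{2,h}(\alpha_1)$, and the tangential stress continuity of \cref{remark 1: tangential stress} (equation~\cref{1_tangential_stress_symmetry}), whose leading asymptotic order is $\frac{\mu_k}{\varepsilon}\partial_z\vbf_{k,h}$, forces $\frac{\mu_1}{\varepsilon}\partial_z\vbf_{1,h}(\alpha_1)=\frac{\mu_2}{\varepsilon}\partial_z\vbf_{2,h}(\alpha_1)+O(\mu\varepsilon)$; dividing by $\varepsilon$ shows this combination is only $O(\mu)=O(\varepsilon^\tau)$. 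Using $\favre{\vbf_{k,h}}=\favre{\vbf_h}+O(\varepsilon^{2-\tau})$ in the accumulation, convective and boundary-friction terms, the tensor-product approximation~\cref{4_approx_tensor_product}, the interior viscous integrals which are $O(\mu)$, and the pressure sum $\alpha_1\bar{p_1}+\alpha_2\bar{p_2}$, I recover~\cref{Dirichlet-qdm} with $\bar{\rho}=\alpha_1\bar{\rho_1}+\alpha_2\bar{\rho_2}$ and combined friction $-\delta_\xi\,\widehat{(\kappa_1+\kappa_2)}\favre{\vbf_h}$, up to the announced $\varepsilon^\tau+\varepsilon^{2-\tau}+(1-\delta_\xi)\varepsilon^{\xi-1}$.

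For symmetrizability I would pass to the variables $(\bar{\rho},Y,\favre{\vbf_h})$, with mass fraction $Y=\alpha_1\bar{\rho_1}/\bar{\rho}$. Summing~\cref{Dirichlet-cm} gives the continuity equation for $\bar{\rho}$, combining them shows that $Y$ is transported by $\favre{\vbf_h}$, and $\bar{p_1}=\bar{p_2}=:p$ together with~\cref{Dirichlet-sum-alpha} turns the pressure term into $\nabla_h p$ since $\alpha_1\bar{p_1}+\alpha_2\bar{p_2}=p$. The closure (pressure equality, barotropy, saturation) defines $p$ as a smooth function $p(\bar{\rho},Y)$, so the system is compressible gas dynamics carrying a passively advected scalar. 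Differentiating $p_1(\bar{\rho_1})=p_2(\bar{\rho_2})$ at fixed $Y$, with $a_k^2:=p_k'(\bar{\rho_k})>0$ from the monotonicity of the equations of state, and eliminating $\d\alpha_1$, yields the Wood-type identity $\frac{1}{\bar{\rho}\,a^2}=\frac{\alpha_1}{\bar{\rho_1}a_1^2}+\frac{\alpha_2}{\bar{\rho_2}a_2^2}$ for the mixture sound speed $a^2=\partial_{\bar{\rho}}p|_Y$, which is thus strictly positive. With $a^2>0$ the eigenvalues in a direction $\nbf$ are $\favre{\vbf_h}\cdot\nbf$ and $\favre{\vbf_h}\cdot\nbf\pm a$, all real, and the convex total energy $\frac12\bar{\rho}\norm{\favre{\vbf_h}}^2+\alpha_1\bar{\rho_1}e_1(\bar{\rho_1})+\alpha_2\bar{\rho_2}e_2(\bar{\rho_2})$ (with $\rho^2 e_k'(\rho)=p_k(\rho)$) provides a symmetrizer through the Godunov--Mock theorem.

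The main obstacle is precisely the joint treatment of the interfacial viscous fluxes in the summation: unlike the two-velocity model these fluxes are not individually closable in terms of averaged quantities, and one must show that their \emph{difference} is controlled at order $\varepsilon^\tau$ through the tangential stress continuity alone. The remaining difficulty, purely computational, is the positivity of $a^2$ --- that the implicitly defined pressure genuinely increases with the total density --- which the Wood identity renders transparent once $\d\alpha_1$ is eliminated.
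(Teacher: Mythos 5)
Your proposal is correct, and where the paper actually writes something out it coincides with it: the paper proves only the velocity equalization, in the remark following the theorem, by combining the Dirichlet condition $\vbf_{1,h}(\alpha_1)=\vbf_{2,h}(\alpha_1)$ with the estimate~\cref{4_approx_velocity_avg} of~\cref{lemma 3} to get $\favre{\vbf_{1,h}}-\favre{\vbf_{2,h}}=O(\varepsilon^{1+\xi-\tau}+\varepsilon^{2-\tau})=O(\varepsilon^{2-\tau})$, exactly your first step (and you rightly observe that~\cref{lemma 3} survives the change of interface condition, since it uses only the wall conditions and the horizontal momentum equation). Beyond that the paper gives no proof: it motivates the theorem by remarking that taking $\kappa_i=\widehat{\kappa_i}\varepsilon^\zeta$ with $\zeta<1$ makes the relative velocity~\cref{4_diff_velocity} vanish, and it merely asserts the closed system and its symmetrizability. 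Your direct route supplies precisely the missing pieces, and both are sound. The summation of the two phasic balances~\cref{3_qdm_h_brut} is the right mechanism: the interfacial pressure terms combine into $(p_1(\alpha_1)-p_2(\alpha_1))\nabla_h\alpha_1=O(\mu)$ by~\cref{3_asymptotic_tensor_z} (equivalently your $p_i\nabla_h(\alpha_1+\alpha_2)=0$ after~\cref{4_approx_pressure_avg}), and the leading term of~\cref{1_tangential_stress_symmetry} in rescaled form is indeed $\tfrac{\mu_k}{\varepsilon}\partial_z\vbf_{k,h}$ with $O(\mu\varepsilon)$ remainder, so the difference of the one-sided fluxes $\tfrac{\mu_k}{\varepsilon^2}\partial_z\vbf_{k,h}(\alpha_1)$ is $O(\mu)=O(\varepsilon^\tau)$, consistent with the announced error; this is what replaces the per-phase closure by $\kappa_i$ available in~\cref{thm_MR}. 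Your hyperbolicity argument also goes beyond the paper's bare assertion and is correct: differentiating $p_1(\bar{\rho_1})=p_2(\bar{\rho_2})$ at fixed $Y$ and eliminating $\d\alpha_1$ does give the Wood identity, using
\begin{equation*}
  \frac{Y}{\bar{\rho_1}}+\frac{1-Y}{\bar{\rho_2}}=\frac{1}{\bar{\rho}},
  \qquad\text{hence}\qquad
  \frac{1}{\bar{\rho}\,a^2}=\frac{\alpha_1}{\bar{\rho_1}a_1^2}+\frac{\alpha_2}{\bar{\rho_2}a_2^2}>0 .
\end{equation*}
The only soft spot is that you assert, rather than check, the convexity in conservative variables of the mixture energy fed to Godunov--Mock; it does hold here (the pressure equality makes the $\d\alpha_1$ contributions cancel, giving $\partial_{m_k}\mathcal{E}=e_k+p/\bar{\rho_k}$ with $m_k=\alpha_k\bar{\rho_k}$, from which the Gibbs relation and convexity follow by a standard computation), but a complete write-up should include that verification since it is the substance of the final claim.
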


\begin{remark}
    The equality of the velocities up to an error of order $\varepsilon^{2-\tau}$ stems from the Dirichlet condition associated with the estimate~\cref{4_approx_velocity_avg}. Indeed, since the velocities are equal at the interface, and since they don't vary much vertically, we have that 
    \[ \left\{ \begin{array}{rcl}\vbf_{1,h}(\alpha_1)&=&\vbf_{2,h}(\alpha_1),\\ 
        \forall k\in \{1,2\},\qquad     \vbf_{k,h}(\alpha_1)&=&\favre{\vbf_{k,h}}+O\Big( \frac{\kappa \varepsilon+\varepsilon^2}{\mu}\Big).
    \end{array}\right.\]
    Once combined, and developing every coefficient with respect to its scaling, 
    \[ \favre{\vbf_{1,h}}=\favre{\vbf_{2,h}}+O(\varepsilon^{1+\xi-\tau}+\varepsilon^{2-\tau}).\]
    Since $\xi\geqslant 1$, the equation is satisfied up to an error of order $\varepsilon^{2-\tau}$.
\end{remark}

\section{The Navier-Stokes-Fourier case}\label{sec:Navier-Stokes-Fourier}
This final part extends the results of the previous section to the non-barotropic case. The microscopic model must therefore be completed with two energy equations, and the equations of state must now include the specific internal energies. We reproduce the averaging process on these two energy equations and pay special attention to the heat transfer between the fluids. This will enable us to derive a two-velocity, one-pressure and two-temperature averaged model. 
\subsection{The Navier-Stokes-Fourier system}

Let $k \in \{1,2\}$. Let $\theta_k: \R_+^* \times \Omega_k \to \R$ be the temperature of fluid $k$, $e_k: \R_+^* \times \Omega \to \R$ its specific internal energy $k$ and $E_k = (1/2)\norm{\vbf_k}^2 + e_k$ its specific total energy. Let $\beta_k$ be the thermal conductivity of the fluid $k$. It is a constant parameter of the flow that might depend on $\varepsilon$, similarly to the viscosities and friction coefficients. The flow is now described by two compressible Navier-Stokes-Fourier systems and two equations of state:

\begin{align}
    \partial_t \rho_k + \nabla \cdot (\rho_k \vbf_k)&=0, \label{5_cm}\\
    \partial_t (\rho_k \vbf_k) + \nabla \cdot (\rho_k \vbf_k \otimes \vbf_k) + \nabla p_k &= \nabla \cdot \S_k,\label{5_qdm}\\
    \partial_t (\rho_k E_k)+\nabla \cdot (\vbf_k (\rho_k E_k + p_k)) &= \nabla \cdot (\S_k \vbf_k)+ \nabla\cdot (\beta_k \nabla \theta_k), \label{5_energy}\\
    p_k&=p_k(\rho_k, \rho_k e_k), \label{5_pressure}\\
    \theta_k&=\theta_k(\rho_k, \rho_k e_k). \label{5_temperature}
\end{align}
The pressure $p_k$ and the temperature $\theta_k$ are defined thanks to a complete equation of state which assumes the existence of a specific entropy $s_k$ such that
\begin{equation}
    \theta_k \d s_k=  p_k \d \left( \frac{1}{\rho_k}\right)+\d e_k.
    \label{5_thermodynamic_principle}
\end{equation}

We keep Navier boundary conditions on the top and bottom of the domain (see~\cref{1_Navier_bottom} and~\cref{1_Navier_top}) and the interface conditions~\cref{1_tensor_interface} and~\cref{1_Navier_interface}. Additional conditions are required to describe the heat flux at the boundary and interface. We prescribe the following Robin-type conditions, see for instance~\cite{hahn_heat_2012}:
\begin{align}
    \beta_1\partial_z \theta_1|_{z=0}&=0,\label{5_adiabatic_bottom}\\
    \beta_2\partial_z \theta_2|_{z=D}&=0,\label{5_adiabatic_top}\\
    -\beta_1\nabla \theta_1\cdot \nbf_{\Gamma_D}|_{z=D\alpha_1}&= h_c(\theta_1-\theta_2)|_{z=D\alpha_1},\label{5_heat_1_interface}\\
    -\beta_2\nabla \theta_2\cdot \nbf_{\Gamma_D}|_{z=D\alpha_1}&= h_c(\theta_1-\theta_2)|_{z=D\alpha_1},\label{5_heat_2_interface}
\end{align}
where $h_c$ is the contact conductance for the interface. The first two conditions depict the adiabatic nature of the flow: the system is therefore thermically isolated. The last two conditions describe the heat exchange at the interface. More precisely, the heat flux at the interface is continuous and is expressed thanks to~\cref{5_heat_1_interface} and~\cref{5_heat_2_interface}.

\subsection{The averaged model}

The averaging procedure of the previous section is still valid in the Navier-Stokes-Fourier case. The difference lies in the presence of energy equations for both fluids as well as new equations of state for the pressure and the temperature. In the following, we are therefore going to focus on averaging these elements, which works in a similar way than what has been done previously. The detailed averaged model is given below. 

\begin{theorem}\label{5-thm_MR}
    Assume the following scalings for the different coefficients in our system:
    \begin{equation}
        \forall k\in \{1,2\}, \qquad \left\{ \begin{array}{rll}
        \mu_k=\widehat{\mu_k}\varepsilon^\tau, &\lambda_k=\widehat{\lambda_k}\varepsilon^\tau, & 0<\tau < 2 ; \\
        \kappa_{i}=\widehat{\kappa_i} \varepsilon, & \kappa_k=\widehat{\kappa_k}\varepsilon^\xi, & \xi\geqslant 1; \\
        \beta_k = \widehat{\beta_k} \varepsilon^{\gamma}, && 0\leqslant\gamma <2.
        \end{array}\right.
    \label{5-MR_profile_viscosity_friction}
    \end{equation}

    \begin{flalign*}
    \text{Define} && \delta_\xi = \left\{ \begin{array}{rl}
        1 & \text{if } \xi =1,\\
        0 & \text{if } \xi >1.
    \end{array}\right. \text{ and } \delta_\gamma = \left\{ \begin{array}{rl}
        1 & \text{if } \gamma =0,\\
        0 & \text{if } \gamma >0.
    \end{array}\right. &&
    \end{flalign*}

    \noindent Let $(\alpha_1,\rho_1,\vbf_1,p_1,E_1,\theta_1,\alpha_2,\rho_2,\vbf_{2},p_2,E_2,\theta_2)$ be a strong solution to the Navier-Stokes system with Navier boundary and interface conditions~\cref{1_fraction_advected_form},\cref{5_cm}-\cref{5_heat_2_interface}. Assume that 
    \begin{equation}
        \partial_z \nabla_h \theta_k=O(\varepsilon^2/\beta).
        \label{5_MR_hyp_grad_theta}
    \end{equation}

    \noindent The functions $(\alpha_1,\bar{\rho_1},\favre{\vbf_{1,h}},\bar{p_1},\favre{E_1},\bar{\theta_1},\alpha_2,\bar{\rho_2},\favre{\vbf_{2,h}},\bar{p_2},\favre{E_2},\bar{\theta_2})$ satisfy the following closed system:
    \begin{align}
        &\partial_t(\alpha_k \bar{\rho_k}) + \nabla_h \cdot (\alpha_k \bar{\rho_k} \favre{\vbf_{k,h}})=0,\label{5-MR-cm}\\
        \begin{split}
        &\partial_t (\alpha_k \bar{\rho_k} \favre{\vbf_{k,h}}) + \nabla_h \cdot (\alpha_k \bar{\rho_k} \favre{\vbf_{k,h}} \otimes \favre{\vbf_{k,h}})+ \nabla_h(\alpha_k \bar{p_k})\\
        &= p_i \nabla_h \alpha_k -\delta_\xi \widehat{\kappa_k}\favre{\vbf_{k,h}}+{(-1)}^k\widehat{\kappa_i}(\favre{\vbf_{1,h}}-\favre{\vbf_{2,h}}),
        \end{split}\label{5-MR-qdm}\\
        \begin{split}
        &\partial_t(\alpha_k \bar{\rho_k} \favre{E_{k,h}})+\nabla_h \cdot (\alpha_k (\bar{\rho_k} \favre{E_{k,h}}+\bar{p_k}) \favre{\vbf_{k,h}}) + p_i\partial_t \alpha_k\\
        &= -\delta_\xi \widehat{\kappa_k}\norm{\favre{\vbf_{k,h}}}^2
        +{(-1)}^{k}\widehat{\kappa_i} (\favre{\vbf_{1,h}}-\favre{\vbf_{2,h}})\cdot \favre{\vbf_{k,h}}\\
        &+\delta_\gamma (\widehat{\beta_k} \nabla_h \cdot (\alpha_k \nabla_h \bar{\theta_k}))+{(-1)}^{k} \widehat{h_c} (\bar{\theta_1}-\bar{\theta_2}),
        \end{split} \label{5-MR-energy}\\
        &\bar{p_k}=p_k(\bar{\rho_k},\bar{\rho_k}\favre{e_k}) \label{5-MR-EoS-pressure}\\
        &\bar{\theta_k}=\theta_k(\bar{\rho_k},\bar{\rho_k}\favre{e_k}),\label{5-MR-EoS-temperature}\\
        &\bar{p_1}=\bar{p_2},\label{5-MR-pressures-equality}\\
        &\alpha_1+\alpha_2=1, \label{5-MR-sum-alpha}
    \end{align}
    where $k \in \{1,2\}$ and $\widehat{h_c}=h_c/\varepsilon$. 

    As in the barotropic case, equations~\cref{5-MR-qdm} and~\cref{5-MR-pressures-equality} are satisfied up to an error of order $\varepsilon^\tau+\varepsilon^{2-\tau}+(1-\delta_\xi)\varepsilon^{\xi-1}$ and $\varepsilon^\tau$ respectively. The averaged energy equation~\cref{5-MR-energy} is satisfied up to an error of order $\varepsilon^\tau + \varepsilon^{2-\tau}+\varepsilon^{2-\gamma}+(1-\delta_\gamma)\varepsilon^\gamma$. The averaged equations of state~\cref{5-MR-EoS-pressure}-\cref{5-MR-EoS-temperature} are satisfied up to an error of order $\varepsilon^\tau$ and $\varepsilon^{2-\gamma}$ respectively.\end{theorem}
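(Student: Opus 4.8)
The plan is to build directly on the barotropic analysis of \cref{thm_MR}. Since the mass conservation equation \cref{5_cm} and the momentum equation \cref{5_qdm} are identical to their barotropic counterparts, and neither the Navier boundary nor the Navier interface conditions have changed, the averaged equations \cref{5-MR-cm} and \cref{5-MR-qdm}, together with the pressure equality \cref{5-MR-pressures-equality} and the saturation \cref{5-MR-sum-alpha}, follow verbatim from the proof of \cref{thm_MR}, with the same error orders. The genuinely new work concerns the energy equation \cref{5_energy} and the two equations of state \cref{5_pressure}--\cref{5_temperature}. I would first rescale \cref{5_energy} and the thermal conditions \cref{5_adiabatic_bottom}--\cref{5_heat_2_interface} exactly as in \cref{Lemma 1}, introducing characteristic scales for the internal energy, the total energy, the temperature and the conductivity so that $\beta_k=\widehat{\beta_k}\varepsilon^\gamma$ and $\widehat{h_c}=h_c/\varepsilon$. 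As in the momentum equation, vertical derivatives carry a factor $\varepsilon^{-1}$, so the heat flux $\nabla\cdot(\beta_k\nabla\theta_k)$ splits into a horizontal part $\beta_k\Delta_h\theta_k=O(\varepsilon^\gamma)$ and a dominant vertical part $\tfrac{\beta_k}{\varepsilon^2}\partial_{zz}\theta_k$. Dropping the terms of high order in $\varepsilon$, while tracking the errors with Landau notation, yields an asymptotic energy equation analogous to \cref{3_asymptotic_qdm_z}.

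The core step is the vertical averaging of the energy equation: I would multiply the rescaled equation by $\chi_k$ and integrate over $z\in(0,1)$, exactly as for mass and momentum. Three families of boundary terms appear and must be closed. First, the time derivative together with the advective flux $\vbf_k(\rho_kE_k+p_k)$ produce, after use of the kinematic fraction equation \cref{1_rescaled_fraction}, a cancellation of the $\rho_kE_k$ contributions and leave the pressure work $p_k(\alpha_1)\partial_t\alpha_k$; replacing $p_k(\alpha_1)$ by $p_i$ as in \cref{lemma 3} gives the nonconservative term $p_i\partial_t\alpha_k$ on the left-hand side of \cref{5-MR-energy}. Second, the viscous work $\nabla\cdot(\S_k\vbf_k)$ contributes boundary terms at the wall and at the interface, which I would close with the Navier boundary conditions \cref{1_rescaled_bottom} and \cref{1_rescaled_top} and the asymptotic tangential interface condition \cref{3_asymptotic_Navier_tan}; these reproduce the friction–dissipation terms $-\delta_\xi\widehat{\kappa_k}\Norm{\favre{\vbf_{k,h}}}^2$ and $(-1)^k\widehat{\kappa_i}(\favre{\vbf_{1,h}}-\favre{\vbf_{2,h}})\cdot\favre{\vbf_{k,h}}$. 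Third, the conduction flux $\beta_k\partial_z\theta_k$ vanishes at the walls by the adiabatic conditions \cref{5_adiabatic_bottom}--\cref{5_adiabatic_top}, while at the interface the Robin conditions \cref{5_heat_1_interface}--\cref{5_heat_2_interface} turn it into the heat-exchange term $(-1)^k\widehat{h_c}(\bar{\theta_1}-\bar{\theta_2})$; the integrated horizontal conduction contributes $\widehat{\beta_k}\varepsilon^\gamma\nabla_h\cdot(\alpha_k\nabla_h\bar{\theta_k})$, which survives only when $\gamma=0$, whence the factor $\delta_\gamma$.

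It remains to close the system by expressing pointwise quantities through their averages. The velocity and the pressure are handled as in the barotropic case (\cref{lemma 3}), so that $\vbf_{k,h}(z)=\favre{\vbf_{k,h}}+O((\varepsilon\kappa+\varepsilon^2)/\mu)$ and $p_k(\alpha_1)=p_i+O(\mu)$. For the temperature, I would use the dominant vertical conduction balance, which forces $\partial_{zz}\theta_k=O(\varepsilon^{2-\gamma})$; combined with the adiabatic condition $\partial_z\theta_k=0$ at the wall and a Taylor--Lagrange expansion, this gives $\theta_k(z)=\bar{\theta_k}+O(\varepsilon^{2-\gamma})$. The averaged equations of state \cref{5-MR-EoS-pressure}--\cref{5-MR-EoS-temperature} then follow by Taylor expanding $p_k$ and $\theta_k$ about the averaged arguments $(\bar{\rho_k},\bar{\rho_k}\favre{e_k})$, using that $\rho_k$ and $\rho_ke_k$ vary little in $z$: the pressure law inherits the $\varepsilon^\tau$ error of the momentum analysis, whereas the temperature law inherits the $\varepsilon^{2-\gamma}$ error of the conduction balance. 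Collecting all contributions produces the announced error $\varepsilon^\tau+\varepsilon^{2-\tau}+\varepsilon^{2-\gamma}+(1-\delta_\gamma)\varepsilon^\gamma$ for \cref{5-MR-energy}.

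The main obstacle I anticipate is the control of the temperature profile and of the conduction boundary terms. Unlike the pressure, whose vertical variation is governed by the clean relation $\partial_zp_k=O(\mu)$, the temperature is coupled to the velocity through viscous heating, so isolating the diffusive balance $\partial_{zz}\theta_k=O(\varepsilon^{2-\gamma})$ cleanly is delicate; this is precisely where the extra hypothesis \cref{5_MR_hyp_grad_theta} on the mixed derivative $\partial_z\nabla_h\theta_k$ enters, since it controls the cross terms generated when the horizontal Laplacian is integrated over the $z$-dependent layer $(h_{k-1},h_k)$. A second delicate point is the bookkeeping of signs in the exchange term: the opposite orientations of the interface normal seen by the two phases must be tracked carefully to obtain the symmetric factor $(-1)^k$ in front of $\widehat{h_c}(\bar{\theta_1}-\bar{\theta_2})$.
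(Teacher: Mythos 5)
Your proposal reproduces the paper's proof almost step for step: reuse of the barotropic analysis for mass, momentum, pressure equality and saturation; rescaling of the energy equation and thermal conditions; vertical averaging with closure of the boundary terms by the Navier and Robin conditions; the temperature profile estimate $\theta_k=\bar{\theta_k}+O(\varepsilon^{2-\gamma})$; Taylor expansion of the equations of state about $(\bar{\rho_k},\bar{\rho_k}\favre{e_k})$; and the same error bookkeeping, including the origin of $\delta_\gamma$ and of the sign $(-1)^k$ in the exchange terms.

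The one genuine misstep is the role you assign to hypothesis~\cref{5_MR_hyp_grad_theta}. You present it as the tool that lets you isolate the vertical conduction balance $\partial_{zz}\theta_k=O(\varepsilon^{2-\gamma})$ from the viscous heating. It is not, and it could not be: that hypothesis concerns the mixed derivative $\partial_z\nabla_h\theta_k$ and says nothing about the velocity field. In the paper, the viscous heating $(\mu_k/\varepsilon^2)\,\partial_z(\vbf_{k,h}\cdot\partial_z\vbf_{k,h})=(\mu_k/\varepsilon^2)\big(\norm{\partial_z\vbf_{k,h}}^2+\vbf_{k,h}\cdot\partial_{zz}\vbf_{k,h}\big)$ is controlled by the velocity estimates already established in the barotropic analysis, namely $\mu_k\partial_{zz}\vbf_{k,h}=O(\varepsilon^2)$ (see~\cref{2_odg_dzz_velocity}) and $\mu_k\partial_z\vbf_{k,h}=O(\kappa\varepsilon+\varepsilon^2)$ (see~\cref{4_odg_dz_velocity}); with the prescribed scalings these make the heating term $O(\varepsilon^2)$ once the rescaled energy equation is multiplied by $\varepsilon^2$, which is exactly how the paper obtains $\beta_k\partial_{zz}\theta_k=O(\varepsilon^2)$ in~\cref{5_estimate_temperature_d2}. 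Hypothesis~\cref{5_MR_hyp_grad_theta} serves a separate purpose, and only when $\gamma=0$: after integrating the horizontal diffusion over the $z$-dependent layer, the new unknown $\bar{\nabla_h\theta_k}$ appears, and the hypothesis is what allows its replacement by $\nabla_h\bar{\theta_k}$, i.e. the closure $\nabla_h\cdot(\alpha_k\bar{\nabla_h\theta_k})=\nabla_h\cdot(\alpha_k\nabla_h\bar{\theta_k})+O(\varepsilon^2/\beta)$ of~\cref{5-closure-temperature}. The second clause of your final paragraph (control of "the cross terms generated when the horizontal Laplacian is integrated over the $z$-dependent layer") is the correct statement of its role; the first clause is not. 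This matters because your temperature profile estimate---on which the heat-exchange closure, the $\delta_\gamma$ term and the temperature equation of state all rest---remains unjustified until the viscous heating is bounded by the velocity estimates, so you should make that dependence on the barotropic lemmas explicit.
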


\begin{remark}
Replacing the equations of state~\cref{1_barotropy_laws} by the energy equations~\cref{5_energy} and the equations of state~\cref{5_pressure}-\cref{5_temperature} does not change the fact that the asymptotic averaged system has only one pressure.
\end{remark}

\begin{remark}
    The scaling of the thermal conductivity can be expressed in terms of the Péclet number $\text{Pe}=\text{Re}\text{Pr}$, where $\text{Pr}=c_p \mu/\beta$ is the Prandtl number, $c_p$ is the specific heat and $\text{Re}$ is the Reynolds number. In engineering applications, the Péclet number is often much larger than one, which is consistent with the fact that the Reynolds number is very large in our work. The prescribed scalings for the viscosity and the thermal conductivity enable us to browse different regimes for the Prandtl number.
\end{remark}

\subsection{Proof of~\texorpdfstring{\cref{5-thm_MR}}{5 thm MR}}
The proof of~\cref{5-thm_MR} is similar to the one in the barotropic case. In particular, the averaging of the mass conservation and momentum equations is the same. In this proof, we will only detail the specificities of the Navier-Stokes-Fourier case: rescaling the new unknowns and the energy equations, averaging of the equations of state and integrating the energy equations.

\subsubsection{Rescaling of the energy equations and the thermal flux conditions}

Following the procedure of the barotropic case, it is now necessary to rescale and simplify (by neglecting some terms, as in~\cref{subsec:asym-analysis}) the energy equation~\cref{5_energy}. We introduce the following characteristic dimensions: $\ecal=U^2$ for the specific energy, $\Theta$ for the temperature, $B=RU^3L/\Theta$ for the thermal conductivity and $H=RU^3/\Theta$ for the contact conductance. We then define the following rescaled variables: $\tilde{E}=E/\ecal$, $\tilde{\theta}=\theta/\Theta$, $\tilde{\beta}=\beta/B$ and $\tilde{h_c}=h_c/H$. Then, we remove of the $\tilde{\cdot}$ notation for the sake of clarity.

The rescaling procedure can be done term by term. Let us start with the left-hand side of the energy equation~\cref{5_energy}. Denoting $E_{k,h}=(1/2)\norm{\vbf_{k,h}}^2 + e_k$ for $k\in \{1,2\}$, the rescaled left-hand side of the energy equation reads
\[
\partial_t (\rho_k E_{k,h})+\nabla \cdot (\vbf_{k} (\rho_k E_{k,h} + p_k))
+ \varepsilon^2 (\partial_t (\rho_k w_k^2/2) + \nabla \cdot (\vbf_k (\rho_k w_k^2/2))).
\]
The terms of order $\varepsilon^2$ are neglected in the rest of the derivation.

Rescaling the thermal diffusion term is straightforward and yields 
\[\beta_k \Delta_h \theta_k + \frac{\beta_k}{\varepsilon^2} \partial_{zz} \theta_k.\]

The viscous term is the most technical one. 
\begin{lemma}\label{5-lemma-rescaling-viscous-term}
    After simplification, the rescaled viscous term in the energy equation~\cref{5_energy} reads
    \begin{equation}
        \frac{\mu_k}{\varepsilon^2} \partial_z (\vbf_{k,h}\cdot \partial_z \vbf_{k,h})+O(\mu).
    \label{5_lemma_viscous_term}
    \end{equation}
\end{lemma}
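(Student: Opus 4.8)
The plan is to avoid expanding the nine components of $\nabla\cdot(\S_k\vbf_k)$ by splitting the power of the stress into a force term and a dissipation term. Writing $\S_k=2\mu_k\D_k+\lambda_k(\nabla\cdot\vbf_k)\I$ as in~\cref{1_def_viscous_tensor} and using that $\S_k$ is symmetric, the exact identity $\nabla\cdot(\S_k\vbf_k)=(\nabla\cdot\S_k)\cdot\vbf_k+\S_k:\D_k$ holds, with $\S_k:\D_k=2\mu_k\,\D_k:\D_k+\lambda_k(\nabla\cdot\vbf_k)^2$. Since rescaling is only a change of variables, this identity is preserved in dimensionless form, so I would rescale the two pieces separately and, crucially, reuse the scalings already obtained in~\cref{Lemma 1}.

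For the force term I would write $(\nabla\cdot\S_k)\cdot\vbf_k=(\nabla\cdot\S_k)_h\cdot\vbf_{k,h}+(\nabla\cdot\S_k)_z\,w_k$. The horizontal viscous force $(\nabla\cdot\S_k)_h$ is exactly the right-hand side of the rescaled horizontal momentum equation~\cref{1_rescaled_qdm_h}, i.e. $\frac{\mu_k}{\varepsilon^2}\partial_{zz}\vbf_{k,h}+O(\mu)$; dotting with $\vbf_{k,h}=O(1)$ produces the first half of the claimed leading term, $\frac{\mu_k}{\varepsilon^2}\vbf_{k,h}\cdot\partial_{zz}\vbf_{k,h}+O(\mu)$. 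For the vertical contribution I would use the two rescaling rules of the setup: a vertical derivative amplifies a term by $1/\varepsilon$ (since $z=\varepsilon L\tilde z$), while the vertical velocity is suppressed by $\varepsilon$ (since $W=\varepsilon U$). Although $(\nabla\cdot\S_k)_z$ is therefore one order more singular than its horizontal counterpart, the factor $w_k$ reinstates one power of $\varepsilon$, so that $(\nabla\cdot\S_k)_z\,w_k=O(\mu)$.

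For the dissipation term, the contribution $\lambda_k(\nabla\cdot\vbf_k)^2$ is $O(\mu)$ because $\nabla\cdot\vbf_k=O(1)$ after rescaling. In $2\mu_k\,\D_k:\D_k$ the only entries amplified by $1/\varepsilon$ are the shear components $(\D_k)_{xz}=(\D_k)_{zx}$ and $(\D_k)_{yz}=(\D_k)_{zy}$, whose leading parts are $\tfrac{1}{2\varepsilon}\partial_z u_k$ and $\tfrac{1}{2\varepsilon}\partial_z v_k$; these four symmetric squares contribute exactly $\frac{\mu_k}{\varepsilon^2}\norm{\partial_z\vbf_{k,h}}^2$ to $2\mu_k\,\D_k:\D_k$, while every remaining entry of $\D_k$ is $O(1)$ and hence contributes only $O(\mu)$. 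Adding the two pieces and recognising the product rule $\partial_z(\vbf_{k,h}\cdot\partial_z\vbf_{k,h})=\norm{\partial_z\vbf_{k,h}}^2+\vbf_{k,h}\cdot\partial_{zz}\vbf_{k,h}$ yields~\cref{5_lemma_viscous_term}.

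The delicate point is the $\varepsilon$-bookkeeping needed to certify that the remainder is genuinely $O(\mu)$ and not a larger negative power of $\varepsilon$: I would have to check, entry by entry, that every factor carrying an extra $1/\varepsilon$ --- a vertical derivative, or a $\partial_z$-shear component of $\D_k$ --- is always paired either with a vertical velocity or with a horizontal derivative, each of which restores a compensating power of $\varepsilon$. This is the same cancellation mechanism that forces $(\nabla\cdot\S_k)_z\,w_k$ down to $O(\mu)$, and verifying it exhaustively (rather than only for the few dominant entries) is what makes the estimate rigorous.
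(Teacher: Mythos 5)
Your argument reaches \cref{5_lemma_viscous_term} correctly, but by a genuinely different route than the paper. The paper proceeds by brute force: it writes $\nabla\cdot(\S_k\vbf_k)=\nabla\cdot(\lambda_k(\nabla\cdot\vbf_k)\vbf_k)+\mu_k\nabla\cdot(2\D_k\vbf_k)$, expands the three components of $2\D_k\vbf_k$ explicitly, rescales the divergence componentwise, and observes that every term except $\frac{\mu_k}{\varepsilon^2}\partial_z(\vbf_{k,h}\cdot\partial_z\vbf_{k,h})$ is of order $\mu$. You instead use the splitting $\nabla\cdot(\S_k\vbf_k)=(\nabla\cdot\S_k)\cdot\vbf_k+\S_k:\D_k$ (work of the viscous force plus dissipation), which lets you recycle the rescalings already established in \cref{Lemma 1}: the horizontal force is read off the right-hand side of \cref{1_rescaled_qdm_h}, the vertical force off \cref{1_rescaled_qdm_z}, and only the dissipation $2\mu_k\D_k:\D_k+\lambda_k(\nabla\cdot\vbf_k)^2$ needs a short entrywise estimate. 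This buys economy (no nine-component expansion) and makes transparent where the two halves of the leading term come from, since the product rule $\partial_z(\vbf_{k,h}\cdot\partial_z\vbf_{k,h})=\norm{\partial_z\vbf_{k,h}}^2+\vbf_{k,h}\cdot\partial_{zz}\vbf_{k,h}$ splits it into dissipation and force work; the paper's direct computation, by contrast, needs no auxiliary identity and keeps all the discarded $O(\mu)$ terms visible.

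One slip should be corrected: your claim that $(\nabla\cdot\S_k)_z$ is ``one order more singular'' than its horizontal counterpart is backwards, and as stated it is arithmetically inconsistent with your own conclusion. Reading the right-hand side of \cref{1_rescaled_qdm_z}, the rescaled vertical viscous force is $O(\mu/\varepsilon)$, i.e.\ one order \emph{less} singular than the horizontal force $O(\mu/\varepsilon^2)$ of \cref{1_rescaled_qdm_h}: in $(\nabla\cdot\S_k)_z$ the large shear entries $\S_{k,zh}=O(\mu/\varepsilon)$ receive only horizontal derivatives, while the entry hit by $\partial_z$ is $\S_{k,zz}$, which is only $O(\mu)$ because the $1/\varepsilon$ of $\partial_z$ cancels against the $\varepsilon$-suppression of $w_k$. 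Then the explicit factor $\varepsilon$ carried by the vertical velocity in the energy scaling gives $\varepsilon\cdot O(\mu/\varepsilon)=O(\mu)$, which is exactly what you need. Had the vertical force truly been one order more singular than the horizontal one, your product would be $O(\mu/\varepsilon^2)$ and the lemma would fail. Since the correct order is directly readable from \cref{1_rescaled_qdm_z}, this is a misstatement rather than a gap, but the bookkeeping you rightly flag as the delicate point must be stated in the correct direction.
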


\begin{proof}
    Developping the viscous term of the energy equation~\cref{5_energy} yields
    \[
        \nabla \cdot (\S_k \vbf_k)=\nabla \cdot (\lambda_k (\nabla \cdot \vbf_k)\vbf_k) + \mu_k \nabla \cdot (2\D_k \vbf_k)
    \]
    The bulk term does not need to be detailed as the rescaling does not affect its expression. Since
    \begin{equation*}
       2\D_k \vbf_k=\begin{pmatrix}
            2u \partial_x u + v(\partial_y u + \partial_x v)+w(\partial_z u + \partial_x w)\\
            u(\partial_x v + \partial_y u) + 2v \partial_y v + w(\partial_z v + \partial_y w)\\
            u(\partial_x w + \partial_z u) + v(\partial_y w + \partial_z v) + 2w \partial_z w
        \end{pmatrix},
    \end{equation*}
    the rescaled version of its divergence reads
    \begin{multline*}
        \begin{pmatrix}
            \partial_x \\ \partial_y \\ \frac{1}{\varepsilon}\partial_z
        \end{pmatrix}
        \cdot
        \begin{pmatrix}
            2u \partial_x u + v(\partial_y u + \partial_x v)+w(\partial_z u + \varepsilon^2 \partial_x w)\\
            u(\partial_x v + \partial_y u) + 2v \partial_y v + w(\partial_z v + \varepsilon^2 \partial_y w)\\
            u(\varepsilon \partial_x w + \frac{1}{\varepsilon}\partial_z u) + v(\varepsilon\partial_y w + \frac{1}{\varepsilon} \partial_z v) + 2\varepsilon w \partial_z w
        \end{pmatrix}\\
        =\nabla_h \cdot ((\nabla_h \otimes \vbf_{k,h} + {(\nabla_h \otimes \vbf_{k,h})}^T)\vbf_{k,h})+\nabla_h \cdot (w_k \partial_z \vbf_{k,h} + \varepsilon^2 w_k \nabla_h w_k)\\
        +\partial_z (\vbf_{k,h}\cdot \nabla_h w_k)+\frac{1}{\varepsilon^2} \partial_z (\vbf_{k,h}\cdot \partial_z \vbf_{k,h})+2w_k\partial_z w_k.
    \end{multline*}
    
    \noindent Multiplying the previous term by $\mu_k$ and adding the bulk term yields the following expression for the rescaled viscous term:
    \begin{multline*}
        \lambda_k\nabla \cdot ( (\nabla \cdot \vbf_k)\vbf_k) + \mu_k \nabla_h \cdot ((\nabla_h \otimes \vbf_{k,h} + {(\nabla_h \otimes \vbf_{k,h})}^T)\vbf_{k,h})\\
        +\mu_k \nabla_h \cdot (w_k \partial_z \vbf_{k,h} + \varepsilon^2 w_k \nabla_h w_k)+\mu_k \partial_z (\vbf_{k,h}\cdot \nabla_h w_k)\\
        +\mu_k \frac{1}{\varepsilon^2} \partial_z (\vbf_{k,h}\cdot \partial_z \vbf_{k,h})+2\mu_k w_k\partial_z w_k.
    \end{multline*}
    The prevailing term in the previous expression is of order $\mu/\varepsilon^2$. The rest of the terms are of order $\mu$. Knowing that the use of Navier interface conditions cause an approximation error of order $\mu$, we neglect those terms right away to ensure simpler equations. Hence~\cref{5_lemma_viscous_term}.
\end{proof}

Finally, the rescaled version of the energy equation reads
\begin{multline}
    \partial_t (\rho_k E_{k,h})+\nabla \cdot (\vbf_{k} (\rho_k E_{k,h} + p_k))
    = \frac{\mu_k}{\varepsilon^2} \partial_z(\vbf_{k,h}\cdot \partial_z \vbf_{k,h}) \\
    + \beta_k \Delta_h \theta_k+\frac{\beta_k}{\varepsilon^2} \partial_{zz} \theta_k +O(\mu+\varepsilon^2).
    \label{5_rescaled_energy}
\end{multline}

The rescaled boundary and interface conditions~\cref{5_adiabatic_bottom}-\cref{5_heat_2_interface} are
\begin{align}
    \beta_1\partial_z \theta_1|_{z=0}&=0,\label{5_rescaled_adiabatic_bottom}\\
    \beta_2\partial_z \theta_2|_{z=1}&=0,\label{5_rescaled_adiabatic_top}\\
    \big(\varepsilon\beta_1\nabla_h \alpha_1\cdot \nabla_h \theta_1-\frac{\beta_1}{\varepsilon}\partial_z \theta_1\big)\big|_{z=\alpha_1}&= h_c(\theta_1-\theta_2)|_{z=\alpha_1},\label{5_rescaled_heat_1_interface}\\
    \big(\varepsilon\beta_2\nabla_h \alpha_1\cdot \nabla_h \theta_2-\frac{\beta_2}{\varepsilon}\partial_z \theta_2\big)\big|_{z=\alpha_1}&= h_c(\theta_1-\theta_2)|_{z=\alpha_1}.\label{5_rescaled_heat_2_interface}
\end{align}

\begin{remark}\label{5-remark_scaling_temperature}
    Multiplying equation~\cref{5_rescaled_energy} by $\varepsilon^2$, we get that for $k\in \{1,2\}$, 
    \[\beta_k \partial_{zz} \theta_k(z)=-\mu_k \partial_z(\vbf_{k,h}\cdot \partial_z \vbf_{k,h})+O(\varepsilon^2+\mu \varepsilon^2+\varepsilon^4). 
    \]
    Since $\mu_k \partial_z(\vbf_{k,h}\cdot \partial_z \vbf_{k,h}) = \mu_k \norm{\partial_z \vbf_{k,h}}^2 + \mu_k \vbf_{k,h}\cdot \partial_{zz} \vbf_{k,h}$, we can use the estimates~\cref{2_odg_dzz_velocity} and~\cref{4_odg_dz_velocity} and remember the prescribed scalings for the friction and the viscosity coefficients to get that
    \begin{equation}
    \beta_k \partial_{zz}\theta_k (z)=O(\varepsilon^2) \qquad \text{in } \Omega_k.
    \label{5_estimate_temperature_d2}
    \end{equation}
    Using the boundary conditions~\cref{5_rescaled_adiabatic_bottom} and~\cref{5_rescaled_adiabatic_top}, we get that for $k\in \{1,2\}$,
    \begin{equation}
        \beta_k \partial_{z}\theta_k (z)=O(\varepsilon^2) \qquad \text{in } \Omega_k.
        \label{5_estimate_temperature_d1}
        \end{equation}
    Therefore, for $k \in \{1,2\}$,
    \begin{equation}
        \theta_k (z)=\bar{\theta_k}+O(\varepsilon^2/\beta) \qquad \text{in } \Omega_k.
        \label{5_estimate_temperature}
        \end{equation}
    These estimates along with the interface conditions~\cref{5_rescaled_heat_1_interface} and~\cref{5_rescaled_heat_2_interface} tell us that 
    \begin{equation}
        h_c(\theta_1-\theta_2)|_{z=\alpha_1}=O(\varepsilon).
        \label{5_estimate_hc}
    \end{equation}
\end{remark}

\subsubsection{Integration of the energy equations}

Let $k=1$. Since the integral is linear, we are going to detail the averaging of the different terms of the energy equation~\cref{5_rescaled_energy}. 

First, we integrate the left-hand side of the equation between $z=0$ and $z=\alpha_1$. Similarly to section 2.3, we use the fraction equation~\cref{1_rescaled_fraction} and the impermeability condition~\cref{1_rescaled_bottom} to get 
\begin{multline*}
\int_0^{\alpha_1} \big(\partial_t (\rho_1 E_{1,h})+\nabla \cdot (\vbf_{1} (\rho_1 E_{1,h} + p_1))\big) \d z= \partial_t (\alpha_1 \bar{\rho_1} \favre{E_{1,h}})\\+\nabla_h \cdot (\alpha_1 (\bar{\rho_1}\favre{E_{1,h}\vbf_{1,h}}+\bar{p_1\vbf_{1,h}}))-p_1(\alpha_1)\nabla_h \alpha_1+p_1(\alpha_1)w_1(\alpha_1).
\end{multline*}

\noindent Using once again the fraction equation~\cref{1_rescaled_fraction},
\begin{multline*}
    \int_0^{\alpha_1} \big(\partial_t (\rho_1 E_{1,h})+\nabla \cdot (\vbf_{1} (\rho_1 E_{1,h} + p_1))\big) \d z = \partial_t (\alpha_1 \bar{\rho_1} \favre{E_{1,h}})\\+\nabla_h \cdot (\alpha_1 (\bar{\rho_1}\favre{E_{1,h}\vbf_{1,h}}+\bar{p_1\vbf_{1,h}}))
    +p_1(\alpha_1)\partial_t \alpha_1.
    \end{multline*}
Replacing the velocities by their Favre average thanks to~\cref{4_approx_velocity_avg}, we get 
\begin{multline*}
    \int_0^{\alpha_1} \big(\partial_t (\rho_1 E_{1,h})+\nabla \cdot (\vbf_{1} (\rho_1 E_{1,h} + p_1))\big) \d z = \partial_t (\alpha_1 \bar{\rho_1} \favre{E_{1,h}})\\+\nabla_h \cdot (\alpha_1 (\bar{\rho_1}\favre{E_{1,h}}+\bar{p_1})\favre{\vbf_{1,h}})
    +p_1(\alpha_1)\partial_t \alpha_1 + O\Big( \frac{\varepsilon\kappa + \varepsilon^2}{\mu}\Big).
\end{multline*}

Next, we average the viscous term. Using the Navier interface condition~\cref{3_asymptotic_Navier_tan} and the bottom Navier condition~\cref{1_rescaled_bottom}, we get
\begin{multline*}
    \int_0^{\alpha_1} \frac{\mu_1}{\varepsilon^2} \partial_z(\vbf_{1,h}\cdot \partial_z \vbf_{1,h}) \d z = \frac{\mu_1}{\varepsilon^2} (\vbf_{1,h}(\alpha_1)\cdot \partial_z \vbf_{1,h}(\alpha_1)-\vbf_{1,h}(0)\cdot\partial_z \vbf_{1,h}(0))\\
    = -\frac{\kappa_i}{\varepsilon} (\vbf_{1,h}(\alpha_1)-\vbf_{2,h}(\alpha_1))\cdot \vbf_{1,h}(\alpha_1)-\frac{\kappa_1}{\varepsilon}\norm{\vbf_{1,h}(0)}^2+O(\mu + \kappa_i \varepsilon).
\end{multline*}
Replacing the pointwise values of the velocities by their Favre average using~\cref{4_approx_velocity_avg}, we get
\begin{multline*}
    \int_0^{\alpha_1} \frac{\mu_1}{\varepsilon^2} \partial_z(\vbf_{1,h}\cdot \partial_z \vbf_{1,h}) \d z = -\frac{\kappa_i}{\varepsilon} (\favre{\vbf_{1,h}}-\favre{\vbf_{2,h}})\cdot \favre{\vbf_{1,h}}-\frac{\kappa_1}{\varepsilon}\norm{\favre{\vbf_{1,h}}}^2\\
    +O\Big(\mu + \kappa_i \varepsilon + \frac{(\kappa+\kappa_i)(\kappa + \varepsilon)}{\mu}\Big).
\end{multline*}

Last, integrate the thermal diffusion term. Using the adiabatic boundary condition~\cref{5_adiabatic_bottom} and the interface condition~\cref{5_rescaled_heat_1_interface}, we get
\begin{align*}
    \int_0^{\alpha_1} \big(\beta_1 \Delta_h \theta_1 + \frac{\beta_1}{\varepsilon^2} \partial_{zz}\theta_1 \big) \d z &= \beta_1 \nabla_h \cdot (\alpha_1 \bar{\nabla_h \theta_1}) \\
    & \quad -\beta_1\nabla_h \alpha_1\cdot \nabla_h \theta_1(\alpha_1)+\frac{\beta_1}{\varepsilon^2}\partial_z \theta_1(\alpha_1),\\
    &= \beta_1 \nabla_h \cdot (\alpha_1 \bar{\nabla_h \theta_1}) - \frac{h_c}{\varepsilon} (\theta_1(\alpha_1)-\theta_2(\alpha_1)).
\end{align*}

Approximating the pointwise values of the temperatures by their averages thanks to~\cref{5_estimate_temperature}, we get
\[ \int_0^{\alpha_1} \big(\beta_1 \Delta_h \theta_1 + \frac{\beta_1}{\varepsilon^2} \partial_{zz}\theta_1 \big) \d z = \beta_1 \nabla_h\cdot  (\alpha_1 \bar{\nabla_h \theta_1})- \frac{h_c}{\varepsilon}(\bar{\theta_1}-\bar{\theta_2}) + O(\varepsilon^2/\beta).\]
The term $\bar{\nabla_h \theta_1}$ is unsatisfactory since it introduces a new unknown. To solve this problem, two strategies are possible. The first one is to express this term in terms of the averaged temperature $\theta_1$ using the lemma below. 
\begin{lemma}
    Assume that for $k \in \{1,2\}$ \[ \partial_z \nabla_h \theta_k = O(\varepsilon^2/\beta) \quad \text{in } \Omega_k.\]
    Then, for $k \in \{1,2\}$, 
    \begin{equation}
        \nabla_h \cdot (\alpha_k \bar{\nabla_h \theta_k})=\nabla_h \cdot (\alpha_k \nabla_h \bar{\theta_k})+O(\varepsilon^2/\beta).
        \label{5-closure-temperature}
    \end{equation}
\end{lemma}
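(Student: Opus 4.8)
The plan is to turn the two quantities into a single exact Leibniz identity, isolate a boundary correction term supported on the interface, and control the horizontal divergence of that term using the pointwise estimates on $\theta_k$ together with the standing hypothesis on $\partial_z\nabla_h\theta_k$. I would detail the case $k=1$; the case $k=2$ is identical after replacing the integration bounds $(0,\alpha_1)$ by $(\alpha_1,1)$ and flipping the sign in front of $\nabla_h\alpha_1$ (since $\nabla_h\alpha_2=-\nabla_h\alpha_1$), which does not affect any order estimate.

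First I would differentiate the identity $\alpha_1\bar{\theta_1}=\int_0^{\alpha_1}\theta_1\,\d z$ under the integral sign. Since the upper bound $z=\alpha_1$ depends on $\xbf$, the Leibniz rule gives $\nabla_h(\alpha_1\bar{\theta_1})=\alpha_1\bar{\nabla_h\theta_1}+\theta_1(\alpha_1)\nabla_h\alpha_1$, whereas the product rule gives $\nabla_h(\alpha_1\bar{\theta_1})=\bar{\theta_1}\nabla_h\alpha_1+\alpha_1\nabla_h\bar{\theta_1}$. Equating and rearranging produces the exact relation
\[
\alpha_1\bar{\nabla_h\theta_1}=\alpha_1\nabla_h\bar{\theta_1}+\big(\bar{\theta_1}-\theta_1(\alpha_1)\big)\nabla_h\alpha_1.
\]
Applying $\nabla_h\cdot$ then reduces~\cref{5-closure-temperature} to showing that the divergence of the correction term $\big(\bar{\theta_1}-\theta_1(\alpha_1)\big)\nabla_h\alpha_1$ is $O(\varepsilon^2/\beta)$. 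Expanding it as $\nabla_h\big(\bar{\theta_1}-\theta_1(\alpha_1)\big)\cdot\nabla_h\alpha_1+\big(\bar{\theta_1}-\theta_1(\alpha_1)\big)\Delta_h\alpha_1$, the second summand is immediately $O(\varepsilon^2/\beta)$ because $\bar{\theta_1}-\theta_1(\alpha_1)=O(\varepsilon^2/\beta)$ by~\cref{5_estimate_temperature} and $\Delta_h\alpha_1=O(1)$.

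The remaining work, and the only genuinely delicate point, is the gradient factor $\nabla_h\big(\bar{\theta_1}-\theta_1(\alpha_1)\big)$. For the interface piece I would use the chain rule $\nabla_h\big(\theta_1(\alpha_1)\big)=\nabla_h\theta_1(\alpha_1)+\partial_z\theta_1(\alpha_1)\,\nabla_h\alpha_1$, where $\nabla_h\theta_1(\alpha_1)$ denotes the horizontal gradient evaluated at $z=\alpha_1$; the last contribution is $O(\varepsilon^2/\beta)$ thanks to the estimate $\partial_z\theta_1=O(\varepsilon^2/\beta)$ of~\cref{5_estimate_temperature_d1}. For the average piece, the same Leibniz computation as above yields $\nabla_h\bar{\theta_1}=\bar{\nabla_h\theta_1}+O(\varepsilon^2/\beta)$. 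It therefore remains to compare the vertical average $\bar{\nabla_h\theta_1}$ with the interface value $\nabla_h\theta_1(\alpha_1)$, and this is precisely where the hypothesis $\partial_z\nabla_h\theta_1=O(\varepsilon^2/\beta)$ is needed.

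The main obstacle is thus resolved exactly as the earlier pointwise estimates: since $\partial_z\nabla_h\theta_1=O(\varepsilon^2/\beta)$ and the layer $(0,\alpha_1)$ has thickness at most one, a Taylor--Lagrange expansion gives $\nabla_h\theta_1(z)-\nabla_h\theta_1(z')=O(\varepsilon^2/\beta)$ for all $z,z'\in(0,\alpha_1)$, and averaging in $z'$ yields $\bar{\nabla_h\theta_1}-\nabla_h\theta_1(\alpha_1)=O(\varepsilon^2/\beta)$. Collecting the three bounds gives $\nabla_h\big(\bar{\theta_1}-\theta_1(\alpha_1)\big)=O(\varepsilon^2/\beta)$, so the first summand is $O(\varepsilon^2/\beta)$ as well. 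Hence the divergence of the correction term is $O(\varepsilon^2/\beta)$, which establishes~\cref{5-closure-temperature}.
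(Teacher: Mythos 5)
Your proof is correct and follows essentially the same route as the paper: the same exact Leibniz identity $\alpha_1\nabla_h\bar{\theta_1}=\alpha_1\bar{\nabla_h\theta_1}+(\theta_1(\alpha_1)-\bar{\theta_1})\nabla_h\alpha_1$, the same divergence expansion with the $\Delta_h\alpha_1$ and $\partial_z\theta_1(\alpha_1)$ terms absorbed via the estimates~\cref{5_estimate_temperature} and~\cref{5_estimate_temperature_d1}, and the same final use of the hypothesis~\cref{5_MR_hyp_grad_theta} to compare $\bar{\nabla_h\theta_1}$ with $(\nabla_h\theta_1)(\alpha_1)$. Your explicit Taylor--Lagrange argument in that last step merely spells out what the paper leaves implicit.
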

\begin{proof}
    The proof is done with $k=1$. Since 
    \begin{equation}
        \alpha_1\nabla_h \bar{\theta_1} = \alpha_1\bar{\nabla_h \theta_1}+\nabla_h\alpha_1 (\theta_1(\alpha_1)- \bar{\theta_1}),
        \label{5_gradient_theta}
    \end{equation} 
    taking the horizontal divergence of the previous equation yields
    \begin{multline*}
        \nabla_h \cdot (\alpha_1 \nabla_h \bar{\theta_1})=\nabla_h \cdot ( (\alpha_1 \bar{\nabla_h \theta_1}))+\Delta_h \alpha_1 (\theta_1(\alpha_1)-\bar{\theta_1})\\
        +\nabla_h \alpha_1 \cdot ((\nabla_h \theta_1)(\alpha_1) + \partial_z \theta_1(\alpha_1) \nabla_h \alpha_1-\nabla_h \bar{\theta_1}).
    \end{multline*}
    Using the estimate~\cref{5_estimate_temperature},
    \begin{equation*}
        \nabla_h \cdot (\alpha_1 \nabla_h \bar{\theta_1})=\nabla_h \cdot ( (\alpha_1 \bar{\nabla_h \theta_1}))+\nabla_h \alpha_1 \cdot ((\nabla_h \theta_1)(\alpha_1)-\nabla_h \bar{\theta_1})+O(\varepsilon^2/\beta).
    \end{equation*}
        
    \begin{flalign}
    \text{Reusing~\cref{5_gradient_theta} and estimate~\cref{5_estimate_temperature},} && \nabla_h \bar{\theta_1}=\bar{ \nabla_h \theta_1} + O(\varepsilon^2/\beta).&&
    \end{flalign} 
    Therefore,
    \begin{equation*}
        \nabla_h \cdot (\alpha_1 \nabla_h \bar{\theta_1})=\nabla_h \cdot (\alpha_1 \bar{\nabla_h \theta_1})+\nabla_h \alpha_1 \cdot ((\nabla_h \theta_1)(\alpha_1)-\bar{\nabla_h \theta_1})+O(\varepsilon^2/\beta).
    \end{equation*}
    Finally, thanks to the assumption~\cref{5_MR_hyp_grad_theta}, we can obtain equation~\cref{5-closure-temperature}.
\end{proof}

The second way to solve the introduction of the unknown $\bar{\nabla_h \theta_1}$ is to scale the thermal conductivities as $\beta_k=\widehat{\beta_k}\varepsilon^\gamma$ for $k \in \{1,2\}$ with $0<\gamma<2$. This allows to get rid of the heat diffusion term up to an error of order $\varepsilon^\gamma$. Note that in this case, no assumption about the vertical variation of $\nabla_h \theta_k$ is required. To synthesize the results, we assume that the thermal conductivities are scaled with $0 \leqslant \gamma <2$ as in~\cref{5-MR_profile_viscosity_friction} and that the hypothesis~\cref{5_MR_hyp_grad_theta} is satisfied.

Assembling all the terms, we get the averaged energy equation
\begin{multline}
    \partial_t(\alpha_1 \bar{\rho_1} \favre{E_{1,h}})+\nabla_h \cdot (\alpha_1 (\bar{\rho_1} \favre{E_{1,h}}+\bar{p_1}) \favre{\vbf_{1,h}}) +p_1(\alpha_1) \partial_t \alpha_1  =-\frac{\kappa_1}{\varepsilon}\norm{\favre{\vbf_{1,h}}}^2\\
    +\frac{\kappa_i}{\varepsilon} (\favre{\vbf_{1,h}}-\favre{\vbf_{2,h}})\cdot \favre{\vbf_{1,h}}
    + \beta_1 \nabla_h (\alpha_1 \nabla_h \bar{\theta_1})
    - \frac{h_c}{\varepsilon} (\favre{\theta_1}-\favre{\theta_2})+\rcal,
\end{multline}
\begin{flalign*}
    \text{where} && \rcal = O\Big(\mu +\kappa_i \varepsilon + \varepsilon^2 +\frac{(\kappa + \kappa_i + \varepsilon)(\kappa + \varepsilon)}{\mu}+\frac{\varepsilon^2}{\beta}\Big).&&
\end{flalign*}

Since we have that the averaged pressures are equal up to an error of order $\mu$, they are still indistiguishable in the averaged energy equation. Moreover, similarly to the momentum equation, the pointwise value $p_1(\alpha_1)$ can be replaced by any convex combination of the averaged pressures, that we will denote $p_i$. 

Using the scalings~\cref{5-MR_profile_viscosity_friction}, the averaged energy equation~\cref{5-MR-energy} holds up to an error of order $\varepsilon^\tau + \varepsilon^{2-\tau} + \varepsilon^{2-\gamma}+(1-\delta_\gamma)\varepsilon^\gamma$.

\begin{remark}
    The scaling for the contact conductance $h_c$ has been prescribed in order to get a two-temperature model. Note that the only constraint that we have is given by~\cref{5_estimate_hc}. Chosing a contact conductance that decreases slower than $\varepsilon$ would therefore imply that the difference of temperatures at the interface decreases to zero when $\varepsilon$ becomes very small, resulting in a one-temperature averaged model (since the temperatures do not vary sharply in the vertical direction).
\end{remark}

\subsubsection{Obtaining the averaged equations of state}

The equations of state are averaged in a similar way than in the barotropic case. However, now the pressures and the temperatures depend on two variables, so the proof needs a little adaptation. Let us prove the result for the pressure $p_1$ (the proof for $p_2$ and the temperatures are similar).

Recall that thanks to the asymptotic vertical momentum equation, we have that $\partial_z p_1(\rho_1,\rho_1e_1)=O(\mu)$. Let $z^* \in (0,\alpha_1)$ such that $\rho_1(z^*)=\bar{\rho_1}$. Thanks to a Taylor-Lagrange expansion, we have that 
\[ \forall z \in (0,\alpha_1),\qquad p_1(\rho_1(z),\rho_1e_1(z))=p_1(\rho_1(z^*),\rho_1e_1(z))+O(\mu).\]
Then, consider $z' \in (0,\alpha_1)$ such that $\rho_1e_1(z')=\bar{\rho_1e_1}=\bar{\rho_1}\favre{e_1}$. A second Taylor-Lagrange expansion yields
\[ \forall z \in (0,\alpha_1),\qquad p_1(\rho_1(z^*),\rho_1e_1(z))=p_1(\rho_1(z^*),\rho_1e_1(z'))+O(\mu).\]
\begin{flalign*}
\text{Therefore,} && \forall z \in (0,\alpha_1),\qquad p_1(\rho_1(z),\rho_1e_1(z))&=p_1(\bar{\rho_1},\bar{\rho_1}\favre{e_1})+O(\mu).&&
\end{flalign*}
Averaging the previous equation with respect to $z$ and dropping the error yields~\cref{5-MR-EoS-pressure}.

The procedure is the same for the temperature, but using the estimate~\cref{5_estimate_temperature_d1}, we get that the approximation error is of order $\varepsilon^2/\beta$.

\paragraph*{Acknowledgements}

The two last authors would like to thank Meriem Bahhi who nurtured the preliminary reflexions of this work.

\bibliographystyle{siamplain}
\bibliography{references}

\begin{thebibliography}{10}

\bibitem{baer_two-phase_1986}
{\sc M.~R. Baer and J.~W. Nunziato}, {\em A {Two}-{Phase} {Mixture} {Theory} for the {Deflagration}-to-{Detonation} {Transition} ({DDT}) in {Reactive} {Granular} {Materials}}, International Journal of Multiphase Flow, 12 (1986), pp.~861--889, \url{https://doi.org/10.1016/0301-9322(86)90033-9}.

\bibitem{bestion_physical_1990}
{\sc D.~Bestion}, {\em The physical closure laws in the {CATHARE} code}, Nuclear Engineering and Design, 124 (1990), pp.~229--245, \url{https://doi.org/10.1016/0029-5493(90)90294-8}.

\bibitem{bresch_mathematical_2022}
{\sc D.~Bresch, C.~Burtea, and F.~Lagoutière}, {\em Mathematical justification of a compressible bifluid system with different pressure laws: a continuous approach}, Applicable Analysis, 101 (2022), pp.~4235--4266, \url{https://doi.org/10.1080/00036811.2022.2103679}.

\bibitem{bresch_global_2010}
{\sc D.~Bresch, B.~Desjardins, J.-M. Ghidaglia, and E.~Grenier}, {\em Global {Weak} {Solutions} to a {Generic} {Two}-{Fluid} {Model}}, Archive for Rational Mechanics and Analysis, 196 (2010), pp.~599--629, \url{https://doi.org/10.1007/s00205-009-0261-6}.

\bibitem{bresch_note_2015}
{\sc D.~Bresch and M.~Hillairet}, {\em Note on the {Derivation} of {Multi}-{Component} {Flow} {Systems}}, Proceedings of the American Mathematical Society, 143 (2015), pp.~3429--3443, \url{https://doi.org/10.1090/proc/12614}.

\bibitem{bresch_compressible_2019}
{\sc D.~Bresch and M.~Hillairet}, {\em A compressible multifluid system with new physical relaxation terms}, Annales scientifiques de l'École normale supérieure, 52 (2019), pp.~255--295, \url{https://doi.org/10.24033/asens.2387}.

\bibitem{bresch_multi-fluid_2011}
{\sc D.~Bresch and X.-D. Huang}, {\em A {Multi}-{Fluid} {Compressible} {System} as the {Limit} of {Weak} {Solutions} of the {Isentropic} {Compressible} {Navier}–{Stokes} {Equations}}, Archive for Rational Mechanics and Analysis, 201 (2011), pp.~647--680, \url{https://doi.org/10.1007/s00205-011-0400-8}.

\bibitem{coquel_two-properties_2014}
{\sc F.~Coquel, J.-M. Hérard, K.~Saleh, and N.~Seguin}, {\em Two properties of two-velocity two-pressure models for two-phase flows}, Communications in Mathematical Sciences, 12 (2014), \url{https://hal.science/hal-00788902/file/CHSS-1.pdf} (accessed 2025-05-27).

\bibitem{demay_compressible_2017}
{\sc C.~Demay and J.-M. Hérard}, {\em A compressible two-layer model for transient gas–liquid flows in pipes}, Continuum Mechanics and Thermodynamics, 29 (2017), pp.~385--410, \url{https://doi.org/10.1007/s00161-016-0531-0}.

\bibitem{drew_theory_1999}
{\sc D.~A. Drew and S.~L. Passman}, {\em Theory of {Multicomponent} {Fluids}}, vol.~135 of Applied {Mathematical} {Sciences}, Springer New York, New York, NY, 1999, \url{https://doi.org/10.1007/b97678}.

\bibitem{hahn_heat_2012}
{\sc D.~W. Hahn and M.~N. Ozisik}, {\em Heat Conduction}, Wiley, Hoboken, N.J, 3rd ed~ed., 2012, \url{https://doi.org/10.1002/9781118411285}.

\bibitem{hillairet_baer-nunziato_2019}
{\sc M.~Hillairet}, {\em On {Baer}-{Nunziato} {Multiphase} {Flow} {Models}}, ESAIM: Proceedings and Surveys, 66 (2019), pp.~61--83, \url{https://doi.org/10.1051/proc/201966004}.

\bibitem{hillairet_analysis_2023}
{\sc M.~Hillairet, H.~Mathis, and N.~Seguin}, {\em Analysis of compressible bubbly flows. {Part} {II}: {Derivation} of a macroscopic model}, ESAIM: Mathematical Modelling and Numerical Analysis, 57 (2023), pp.~2865--2906, \url{https://doi.org/10.1051/m2an/2023046}.

\bibitem{ishii_thermo-fluid_2011}
{\sc M.~Ishii and T.~Hibiki}, {\em Thermo-{Fluid} {Dynamics} of {Two}-{Phase} {Flow}}, Springer, New York, 2nd ed~ed., 2011.

\bibitem{ndjinga_influence_2007}
{\sc M.~Ndjinga}, {\em Influence of interfacial pressure on the hyperbolicity of the two-fluid model}, Comptes Rendus. Mathématique, 344 (2007), pp.~407--412, \url{https://doi.org/10.1016/j.crma.2007.02.006}.

\bibitem{ramshaw_characteristics_1978}
{\sc J.~D. Ramshaw and J.~A. Trapp}, {\em Characteristics, stability, and short-wavelength phenomena in two-phase flow equation systems}, Nuclear Science and Engineering, 66 (1978), pp.~93--102, \url{https://doi.org/10.13182/NSE78-A15191}.

\bibitem{ransom_hyperbolic_1984}
{\sc V.~H. Ransom and D.~L. Hicks}, {\em Hyperbolic two-pressure models for two-phase flow}, Journal of Computational Physics, 53 (1984), pp.~124--151, \url{https://doi.org/10.1016/0021-9991(84)90056-1}.

\bibitem{ransom_hyperbolic_1988}
{\sc V.~H. Ransom and D.~L. Hicks}, {\em Hyperbolic two-pressure models for two-phase flow revisited}, Journal of Computational Physics, 75 (1988), pp.~498--504, \url{https://doi.org/10.1016/0021-9991(88)90125-8}.

\bibitem{stewart_stability_1979}
{\sc H.~B. Stewart}, {\em Stability of two-phase flow calculation using two-fluid models}, Journal of Computational Physics, 33 (1979), pp.~259--270, \url{https://doi.org/10.1016/0021-9991(79)90020-2}.

\bibitem{stewart_two-phase_1984}
{\sc H.~B. Stewart and B.~Wendroff}, {\em Two-phase flow: {Models} and methods}, Journal of Computational Physics, 56 (1984), pp.~363--409, \url{https://doi.org/10.1016/0021-9991(84)90103-7}.

\end{thebibliography}
\end{document}